\renewcommand{\theequation}{\thesection.\arabic{equation}}
\makeatletter\@addtoreset{equation}{section} \makeatother
\newtheorem{thm}{Theorem}[section]
\newtheorem{Lemma}{Lemma}[section]
\newtheorem{proposition}{Proposition}[section]
\newtheorem{rem}{Remark}[section]
\makeatletter \setlength{\parindent}{2em}
\title{Global low regularity solutions for nonlinear elastic waves}
\author{Kunio Hidano\thanks{Department of Mathematics, Faculty of Education, Mie University,
1577 Kurima-machiya-cho, Tsu, Mie 514-8507, Japan.
{ E-mail address: hidano@edu.mie-u.ac.jp}} ~~~~  Dongbing Zha\thanks{  Corresponding author. Department of Applied Mathematics, Donghua University, Shanghai 201620, PR China.{ E-mail address: ZhaDongbing@163.com}     }}
\begin{document}

\maketitle

\begin{abstract}
We study the Cauchy problem for 3-D nonlinear elastic waves satisfying the null condition with
low regularity initial data. In the radially symmetric case,
we prove the global existence of a low regularity solution for every small data in $H^3\times H^2$ with a low weight.  \\
\emph{keywords}: Elastic waves; global low regularity solutions; radial symmetry. \\
\emph{2010 MSC}: 35L52; 35Q74
\end{abstract}

\pagestyle{plain} \pagenumbering{arabic}

\section{Introduction}
For elastic materials, the motion for the displacement is governed by the nonlinear elastic wave equation which is a second-order quasilinear hyperbolic system. For isotropic, homogeneous, hyperelastic materials, the motion for the displacement $u=u(t,x)$ satisfies
\begin{align}\label{elastic}
\partial_t^2u-c_2^2\Delta u-(c_1^2-c_2^2)\nabla\nabla\cdot u=F(\nabla u,\nabla^2u),
 \end{align}
 where the nonlinear term $F(\nabla u,\nabla^2u)$ is linear in $\nabla^2u$ and will be described explicitly in later sections. Some physical backgrounds of nonlinear elastic waves can be found in
 Ciarlet \cite{MR936420} and Gurtin \cite{Gurtin81}.
 \par
 The short time existence of classical solutions for nonlinear elastic waves is standard now \cite{MR0420024}.
 The research for long time existence of classical solutions for nonlinear elastic waves started from Fritz John's pioneering work on elastodynamics (see~Klainerman \cite{Klainerman98}). In the 3-D case, John showed that local classical solutions in general will develop singularities for radial and small initial data \cite{John84}, and they almost globally exist for small data with compact support \cite{John88}. See also simplified proof of almost global existence in  \cite{Klainerman96}, \cite{MR3650329} and a lower bound estimate in \cite{MR3014806}.
In order to ensure the global existence of classical solutions with small initial data, some structural condition on the nonlinearity which is called the null condition is necessary. We refer the reader to Agemi \cite{Agemi00} and~Sideris \cite{Sideris00} (see also a previous result in Sideris \cite{Sideris96}).
The exterior domain analogues of John's almost global existence result and Agemi and~Sideris's global existence result were obtained in \cite{MR2249996} and \cite{MR2344575}, respectively. In the 2-D case, under the null condition global small classical solution with radial initial data was constructed in \cite{zhaelastic} and \cite{zha3} for the Cauchy problem and the exterior domain problem, respectively. The non-radially symmetric case is still open. \par
    All the above works are in the framework of classical solutions. Studies on local low regularity well-posedness for the Cauchy problem of nonlinear
wave equations started from Klainerman-Machedon's pioneering work \cite{MR1231427} in the semilinear case. For the general quasilinear equations, we refer the reader to the sharpest result \cite{MR2178963}, \cite{MR3656947} and the references therein. On the other hand, Lindblad constructed
some counterexamples to show sharp results of ill-posedness \cite{MR1375301}, \cite{MR1666844}. For the Einstein equation in wave coordinates, the sharpest
result of well-posedness and ill-posedness was shown by Klainerman and Rodnianski \cite{MR2180400} and Ettinger and Lindblad \cite{MR3583356}, respectively.\par
  We note that a family of special solutions which are obtained via the
Lorentz transformation has often played a key role in the proof of ill-posedness.
It is therefore natural to expect that we can improve regularity requirements
for initial data if we rule out such special solutions. It is indeed the case. It should be mentioned that
improvement of regularity in the presence of
radial symmetry was first observed in \cite{MR1231427}
for semilinear equations such as
$\Box u=c_1(\partial_t u)^2+c_2|\nabla u|^2$.
(See also page 176 of \cite{MR1211729},
Section 5 of \cite{MR2108356}, and
 \cite{MR2262091},
 \cite{MR2971205}, and
\cite{Hidano17} for related results.)
We also mention that
it was also observed for the wave equation with
a power-type nonlinear term $\Box u=F(u)$,
first by Lindbald and Sogge \cite{MR1335386}, and then by
some authors (see  \cite{MR2569550},
 \cite{MR2911108},  \cite{MR3286047}), and quasilinear wave equations of the form
$\partial_t^2u-a^2(u)\Delta u=c_1(\partial_t u)^2+c_2|\nabla u|^2$, by \cite{MR2919103} and  \cite{MR2482537}.
Particularly,
the almost global existence of low regularity radially symmetric solutions with small initial data was showed in \cite{MR2262091} and \cite{MR2919103} for the semilinear and the quasilinear case, respectively. A global
regularity result can be found in \cite{MR2482537}, which is the first result for global existence of low regularity
solutions to quasilinear wave equations.
 \par

In the present paper, we will study global low regularity solutions for the Cauchy problem of 3-D nonlinear elastic waves with the null condition,
and focus on the radially symmetric case. For this purpose,
we first note the fact that in the radially symmetric case, the nonlinear elastic wave equation can be reduced to a system of quasilinear wave equations with single wave speed. To analyze this system, we will use the ghost weight energy estimate of Alinhac to display the null condition and get enough decay in time on the region $r\geq \langle t\rangle/2$, and employ the Klainerman-Sideris type estimate and a refined Keel-Smith-Sogge (KSS, for short) type estimate to control the elastic waves with rather low regularity on the region $1/2\leq r\leq \langle t\rangle/2$ and $r\leq 1/2$. Together with some weighted Sobolev inequalities for radially symmetric functions, we can get
global existence for nonlinear elastic waves
satisfying the null condition
when radial data is small with respect to
the $H^3\times H^2$ norm with a low weight $\langle x\rangle$. Note that even the standard local existence theorem for nonlinear elastic waves (see \cite{MR0420024}) need the regularity requirement $H^s\times H^{s-1}$ with $s>7/2$.
 It is the first global low regularity existence result for nonlinear elastic waves.
A almost global low regularity existence result has been got by the authors \cite{almost}.
\par
The outline of this paper is as follows. The remainder of this introduction will be devoted to the description of
notation which will be used in the sequel, some reduction of the equations of motion and a statement of the low regularity global existence theorem. In Section 2, some necessary tools used to prove the global existence theorem are introduced, including some properties of the null condition, weighted Sobolev inequalities, the bounds on the Klainerman-Sideris type energy and KSS type energy. Finally, the proof of
global low regularity existence theorem will be given in Section 3.

\subsection{Notation}
Denote the space gradient and space-time gradient by $\nabla=(\partial_1,\partial_2,\partial_3)$ and
$
\partial =(\partial_t,\nabla),
$
respectively.
The scaling operator is the vector field
\begin{align}\label{scalinfghj}
S=t\partial_t+r\partial_r,
\end{align}
where $r=|x|,~ \partial_r=\omega\cdot \nabla, ~\omega=(\omega_1,\omega_2,\omega_3), ~\omega_i=x_i/r$.
Denote the collection of vector fields~$Z=(Z_1, Z_2, Z_3, Z_4)=(\nabla,{S})$.
  ~For any multi-index $a=(a_1, a_2, a_3, a_4)$, we denote an ordered product of vector fields $Z^{a}=Z_1^{a_1}Z_2^{a_2}Z_3^{a_3} Z_4^{a_4}=\partial_1^{a_1}\partial_2^{a_2}\partial_3^{a_3} {S}^{a_4}$. Moreover $b\leq a$ means~ $b_{i}\leq a_i$ for each $i=1,\dots,4$.\par
The energy associated to the linear wave operator is
\begin{align}
E_{1}(u(t))=\frac{1}{2}\int_{\mathbb{R}^{3}} \big(|\partial_tu(t,x)|^{2}+ |\nabla u(t,x)|^{2}\big)\, dx,
\end{align}
and the corresponding $k$-th order energy is given by
\begin{align}\label{kord}
E_{k}(u(t))=\sum_{\substack{|a|\leq k-1\\a_4\leq 1}} {E}_1(Z^{a}u(t)).
\end{align}
\par
 We will use the so called \lq\lq good derivatives"
 \begin{align}\label{goodder}
 T=(T_1,T_2,T_3)=\omega\partial_t+\nabla.
 \end{align}
 Let $\sigma=t-r$, $q(\sigma)=\arctan\sigma,
 q'(\sigma)=\frac{1}{1+\sigma^2}=\langle t-r\rangle^{-2}$, where~$\left<\cdot\right>=(1+|\cdot|^2)^{1/2}$.  Since $q$ is bounded, there exists a constant $c>1$, such that
\begin{align}\label{noting}
c^{-1}\leq e^{-q(\sigma)}\leq c.
\end{align}
 Define the ghost weight energy (see \cite{Alinhac01, MR2666888})
 \begin{align}
\mathcal {E}_1(u(t))=\frac{1}{2}\int_{\mathbb{R}^3}  e^{-q(\sigma)}\left<t-r\right>^{-2}{|Tu|^2}\, dx,
 \end{align}
 and its higher-order version
 \begin{align}\label{hkksss}
\mathcal {E}_k(u(t))= \sum_{\substack{|a|\leq k-1\\a_4\leq 1}}\mathcal {E}_1(Z^{a}u(t)).
 \end{align}
\par

The KSS type energy (see \cite{MR2919103,MR1945285,MR2015331,MR2217314}) is defined through
\begin{align}
      \mathcal {M}_{k}(u(t))=\int_0^t  \mathcal {N}_{k}(u(\tau))d\tau,
      \end{align}
      where
\begin{align}\label{morasdd}
  \mathcal {N}_{k}(u(t))=\sum_{\substack{|a|\leq k-1\\a_4\leq 1}}\big\|\left<r\right>^{-1/4}r^{-1/4}\partial Z^{a}u\big\|^2_{L^2(\mathbb{R}^3)}+\sum_{\substack{|a|\leq k-1\\a_4\leq 1}}\big\|\left<r\right>^{-1/4}r^{-5/4} Z^{a}u\big\|^2_{L^2(\mathbb{R}^3)},
\end{align}
and its local version is
      \begin{align}\label{loacljk}
       \mathcal {L}_k(u(t))=\sum_{\substack{|a|\leq k-1\\a_4\leq 1}}\big\|r^{-1/4}\partial Z^{a}u\big\|^2_{L^2(0,t;L^2(|x|\leq 1))}+\sum_{\substack{|a|\leq k-1\\a_4\leq 1}}\big\|r^{-5/4} Z^{a}u\big\|^2_{L^2(0,t;L^2(|x|\leq 1))}.
\end{align}
\par
We will also use the following Klainerman-Sideris type energy (see \cite{Klainerman96})
\begin{align}
{\mathcal {X}}_{k}(u(t))=\sum_{|a|\leq k-2}\big\|\left<t-r\right>\partial\nabla \nabla^{a}u(t)\big\|_{L^2(\mathbb{R}^{3})}.
\end{align}
\par
Consider the space $X^{k}(T)$, which is obtained
by closing the set $C^{\infty}([0,T);C_{0}^{\infty}(\mathbb{R}^3;\mathbb{R}^3))$
 in the norm $\sup\limits_{0\leq t< T}E^{1/2}_{k}(u(t))$.

The solution will be constructed in the space
\begin{align}
X_{\text{rad}}^{k}(T)=\big\{ u: u ~\text{is radially symmetric}, u\in X^{k}(T)\big\}.
\end{align}
\par
Though we will consider radially symmetric solutions, the generators of spatial rotations and
simultaneous rotations will be also important in our analysis.
The angular momentum operators (generators of the spatial rotations) are the vector fields
\begin{align}\label{rotations111}
\Omega=(\Omega_1,\Omega_2,\Omega_3)=x\wedge \nabla,
\end{align}
$\wedge$ being the usual vector cross product. The spatial derivatives can be conveniently decomposed into radial and angular components
\begin{align}\label{angularradial}
\nabla =\omega\partial_r-\frac{\omega}{r}\wedge \Omega.
\end{align}
 Noting the definitions \eqref{goodder} and \eqref{rotations111},
we also have the relationship between the angular momentum operators $\Omega$ and the good derivatives $T$
\begin{align}
\frac{\Omega}{r}=\omega\wedge T,
\end{align}
which implies
\begin{align}\label{haohaohao}
\big|\frac{\Omega}{r} u\big|\leq |Tu|.
\end{align}
Denote the generators of
simultaneous rotations by $\widetilde{\Omega}=(\widetilde{\Omega}_1,\widetilde{\Omega}_2,\widetilde{\Omega}_3)$, where
\begin{align}\label{111222}
\widetilde{\Omega}_{i}=\Omega_{i} I+U_{i},
\end{align}
with
$U_{1}=e_2\otimes e_3-e_3\otimes e_2, U_{2}=e_3\otimes e_1-e_1\otimes e_3, U_{3}=e_1\otimes e_2-e_2\otimes e_1$, $\{e_i\}_{i=1}^{3}$ is the standard basis
on $\mathbb{R}^3$ and $\otimes$ stands for the tensor product on $\mathbb{R}^3$. It is easy to verify the following commutation relationship
\begin{align}\label{compoi1}
\big[\widetilde{\Omega}, \nabla\big]=\nabla,~\big[\widetilde{\Omega}, {S}\big]=0.
\end{align}

\subsection{The equations of motion}
Now we consider the equations of motion for 3-D homogeneous, isotropic and hyperelastic elastic waves. First we have the Lagrangian
 \begin{align}\label{lagrange}
 \mathscr{L}(u)=\iint \frac{1}{2}|u_t|^2-W(\nabla u)~ dxdt,
\end{align}
where $u(t,x)=(u^{1}(t,x),u^{2}(t,x),u^{3}(t,x))$ denotes the displacement vector from the
reference configuration and $W$ is the stored energy function. Since we will consider small solutions, we can write
\begin{align}
W(\nabla u)=l_2(\nabla u)+l_3(\nabla u)+ h.o.t.,
\end{align}
with $l_2(\nabla u)$ and $l_3(\nabla u)$ standing for the quadratic and cubic term in $\nabla u$, respectively, and $h.o.t.$ denoting higher order terms.
Using the frame indifference and isotropic assumption, we can get
\begin{align}\label{L2}
 l_2(\nabla u)=\frac{c_2^2}{2}|\nabla  u|^2+\frac{c_1^2-c_2^2}{2}(\nabla \cdot u)^2,
 \end{align}
 where the material constants $c_1$ (pressure wave speed) and $c_2$ (shear wave speed) satisfy $0<c_2<c_1$,
 and\footnote{We use the summation convention over repeated indices.}
\begin{align}\label{sanjie}
 l_3(\nabla u)&=d_1(\nabla \cdot u)^3+d_2(\nabla \cdot u)|\nabla\wedge u|^2+d_3(\nabla \cdot u)Q_{ij}(u^i,u^j)\nonumber\\
 &~+d_4(\partial_{k}u^{j})Q_{ij}(u^{i},u^{k})+d_5(\partial_{k}u^{j})Q_{ik}(u^{i},u^{j}),
 \end{align}
where $d_i~(i=1,\dots,5)$ are also some constants which only depend on the stored energy function, and the null form $Q_{ij}(f,g)=\partial_i f\partial_jg-\partial_jf\partial_ig$.
\par
By the Hamilton's principle we get the nonlinear elastic wave equation in 3-D
\begin{align}\label{EQwave}
 \partial_t^2u-c_2^2\Delta u-(c_1^2-c_2^2)\nabla\nabla\cdot u=F(\nabla u, \nabla^2 u),
 \end{align}
 where\footnote{In 3-D case, the global existence of small
solutions to quasilinear hyperbolic systems hinges on the specific form
of the quadratic part of the nonlinearity in relation to the linear part. From the analytical point of view, therefore, it is enough to truncate at cubic order in $u$, the higher order corrections having no influence on the existence of small
solutions.}
 \begin{align}\label{nqsddd}
 F(\nabla u, \nabla^2 u)=3d_1\nabla (\nabla\cdot u)^2+d_2\big(\nabla|\nabla\wedge u|^2-2\nabla\wedge(\nabla\cdot u \nabla\wedge u)\big)+Q(u, \nabla u),
 \end{align}
with
 \begin{align}
Q(u, \nabla u)^{i}&=(2d_3+d_4)\big(Q_{ij}(\partial_ku^{k},u^{j})-Q_{jk}(\partial_iu^{k},u^{j})\big)\nonumber\\
&+d_5\big(Q_{ij}(\partial_ju^{k},u^{k})+2Q_{jk}(\partial_ju^{i},u^{k}) -Q_{jk}(\partial_ju^{k},u^{i})\big).
 \end{align}
 The derivation of the exact form of $F(\nabla u, \nabla^2 u)$ can be also found in Agemi \cite{Agemi00}. Following Agemi \cite{Agemi00}, we say that the nonlinear elastic wave equation \eqref{EQwave} satisfies the null condition if
 \begin{align}\label{null1111234}
 d_1=0.
 \end{align}
\par
Consider the Cauchy problem of \eqref{EQwave}--\eqref{nqsddd} with initial data
\begin{align}\label{cauchydata1}
t=0: u=u_0,~u_t=u_1.
\end{align}
\par
Noting the nonlinear elastic wave equation is invariant under simultaneous rotations (see page 860 of \cite{Sideris00}), we can seek radially symmetric solution for the Cauchy problem \eqref{nqsddd}--\eqref{cauchydata1} with radially symmetric initial data.\footnote{A vector function $v$ is called radial if it has the form $v(x)=x\phi(r)~ (r=|x|)$, where $\phi$ is a scalar radial function. We refer the reader to Definition 4.4 and Lemma 4.5 of \cite{MR1774100}.} In the radially symmetric
case, we first derive some equivalent forms of the equation of motion and the null condition.  \par
 If $u\in X^3_{\rm rad}(T)$ is a radially symmetric solution to \eqref{EQwave}, then there exists a scalar function $\psi$ such that
 \begin{align}\label{biao}
 u(t,x)=x\psi(t,r).
 \end{align}
By~\eqref{biao}, it is easy to see that
\begin{align}\label{curl}
\nabla\wedge u =0.
\end{align}
It follows from the Hodge decomposition and \eqref{curl} that
\begin{align}\label{div}
\Delta u=\nabla \nabla\cdot u-\nabla\wedge\nabla\wedge  u=\nabla \nabla\cdot u.
\end{align}
Thanks to \eqref{div}, for the linear part of \eqref{EQwave}, we have
\begin{align}\label{curl11}
\partial^2_t u-c_2^2 \Delta  u-(c_1^2-c_2^2)\nabla \nabla\cdot u=\partial^2_t u-c_1^2 \Delta  u.
\end{align}
Next we consider the nonlinear part $F(\nabla u,\nabla^2 u)$. Obviously \eqref{curl} gives that the second term on the right-hand side of  \eqref{nqsddd} vanishes. Furthermore, the null condition \eqref{null1111234} implies that the first term on the right-hand side of \eqref{nqsddd} also vanishes.
Without loss of generality we can take $c_1=1$. Hence $u$ satisfies
\begin{align}\label{Cauchynew123}
\partial_t^2 u-\Delta u=Q(u, \nabla u).
 \end{align}
\par
Conversely, if $u\in X^3_{\rm rad}(T)$ is a radially symmetric solution to the equation \eqref{Cauchynew123}, then $u$ is also
a radially symmetric solution to the equation \eqref{EQwave} with the null condition \eqref{null1111234}.\par
For the convenience of the following analysis, we rewrite \eqref{Cauchynew123} as
\begin{align}\label{Cauchynew}
\partial_t^2 u-\Delta u= {N}(u,u),
 \end{align}
 where
\begin{align}\label{cutifof99ggneww334}
 N( u, v)^{i}=\partial_{l}({g}^{ijk}_{lmn}\partial_m u^{j}\partial_nv^{k}),
 \end{align}
 and $g=({g}^{ijk}_{lmn})$ is determined via
 $
 {N}(u,u)=Q(u, \nabla u).
$
We can verify that the coefficients are symmetric with respect to pairs of indices
 \begin{align}\label{sym1new}
 g^{ijk}_{lmn}=g^{jik}_{mln}=g^{kji}_{nml},
 \end{align}
 which implies that
 \begin{align}\label{symmert}
 N( u, v)=N( v, u),
 \end{align}
 and the following null condition holds (see Lemma 3.1 of \cite{Agemi00})
 \begin{align} \label{null1100new}
g^{ijk}_{lmn}\omega_l\omega_m\omega_n=0,~\forall~1\leq i,j,k\leq 3, \omega\in S^2.
\end{align}
We note that \eqref{null1100new} just coincides with the definition of standard null condition in the pioneering works Klainerman \cite{Klainerman86} and Christodoulou \cite{Christodoulou86}
 for systems of quasilinear wave equations with single wave speed.
\par
The above argument gives that in order to seek radially symmetric solutions to the Cauchy problem of nonlinear elastic waves \eqref{EQwave}--\eqref{cauchydata1} with the null condition \eqref{null1111234},
we only need to study the radially symmetric solutions to the Cauchy problem \eqref{Cauchynew}--\eqref{cauchydata1} with the null condition
 \eqref{null1100new}.

\subsection{Global low regularity existence theorem}
 The main result in this paper is the following
 \begin{thm}\label{mainthm33}
 Consider the Cauchy problem for {\rm {3}}-D nonlinear elastic waves \eqref{EQwave}--\eqref{cauchydata1}. Assume that the null condition \eqref{null1111234} holds.
Then there exists a constant $\varepsilon_0>0$ such that for any given $\varepsilon$ with $0<\varepsilon\leq \varepsilon_{0}$, if the initial data is radially symmetric and satisfies
\begin{align}\label{label345newe}
\sum_{|a|\leq 2}\|\langle x\rangle\nabla^{a}\nabla u_0\|_{L^2(\mathbb{R}^3)}+\sum_{|a|\leq 2}\|\langle x\rangle\nabla^{a}u_1\|_{L^2(\mathbb{R}^3)}\leq \varepsilon,
\end{align}
then Cauchy problem \eqref{nqsddd}--\eqref{cauchydata1} admits a unique global solution $u\in X_{\rm rad}^{3}(T)$ for every $T>0$.
\end{thm}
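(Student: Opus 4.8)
The plan is to exploit the reduction already performed: by the invariance of \eqref{EQwave} under simultaneous rotations it suffices to produce, for radial data satisfying \eqref{label345newe}, a global radial solution of the system \eqref{Cauchynew}--\eqref{cutifof99ggneww334} whose coefficients obey \eqref{sym1new}, \eqref{symmert} and the null condition \eqref{null1100new}, since such a solution then solves \eqref{EQwave}--\eqref{cauchydata1}. The argument is a continuity (bootstrap) scheme built on the composite functional
\[
\mathcal{A}(t)=\sup_{0\le\tau\le t}\big(E_3^{1/2}(u(\tau))+\mathcal{E}_3^{1/2}(u(\tau))+\mathcal{X}_3(u(\tau))\big)+\mathcal{M}_3^{1/2}(u(t))+\mathcal{L}_3^{1/2}(u(t)),
\]
for which \eqref{label345newe} gives $\mathcal{A}(0)\le C\varepsilon$ --- and here the single weight $\langle x\rangle$ is exactly what is needed to initialize $\mathcal{X}_3$ and the weighted Sobolev estimates. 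Assuming $\mathcal{A}(T)\le2C_1\varepsilon$ on a maximal existence interval, the goal is the improved inequality $\mathcal{A}(T)\le C_1\varepsilon+C_2\mathcal{A}(T)^2$, which for $\varepsilon\le\varepsilon_0$ small closes the bootstrap and yields a bound uniform in $T$.

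The core is the commuted energy estimate. Applying $Z^a$ ($|a|\le2$, $a_4\le1$) to \eqref{Cauchynew} and using bilinearity together with \eqref{symmert}, one has schematically $Z^aN(u,u)=2N(Z^au,u)+(\text{lower-order quadratic terms})$, whose dangerous top-order piece is $N(Z^au,u)^i=\partial_l(g^{ijk}_{lmn}\partial_mZ^au^j\,\partial_nu^k)$. I would \emph{not} expand this term, but keep it in divergence form and, after pairing the equation for $Z^au$ with the Alinhac ghost-weight multiplier $e^{-q(\sigma)}\partial_tZ^au$ and integrating by parts in $x_l$, treat it as a perturbation of $\Box Z^au$ by coefficients linear in $\partial u$; absorbing a lower-order term into a modified energy equivalent to $E_3$, the worst factor stays at the level $\partial Z^au\in L^2$ and only an error of the form $\int|\partial^2u|\,|\partial Z^au|^2\,dx$ survives, besides genuinely lower-order products. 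Two structural gains then drive the estimate. First, the ghost weight produces the positive spacetime term $\int e^{-q(\sigma)}\langle t-r\rangle^{-2}|TZ^au|^2\,dx$, controlling $\mathcal{E}_3$ (and its time integral); second --- and this is where \eqref{null1100new} is decisive --- writing $\partial_a=T_a-\omega_a\partial_t$ in each of the three contracted slots of $g^{ijk}_{lmn}$ shows the all-bad contribution to be proportional to $g^{ijk}_{lmn}\omega_l\omega_m\omega_n=0$, so every surviving nonlinear term carries at least one good derivative $T$ and can be absorbed by the ghost-weight term and by the $\langle t-r\rangle$-weighted Klainerman--Sideris quantity $\mathcal{X}_3$.

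To convert these bounds into the time-integrable decay needed to close $\mathcal{A}(T)\le C_1\varepsilon+C_2\mathcal{A}(T)^2$, I would split $\mathbb{R}^3$ into the regions $r\ge\langle t\rangle/2$, $1/2\le r\le\langle t\rangle/2$, and $r\le1/2$. On $r\ge\langle t\rangle/2$ the weighted Sobolev inequalities for radial functions upgrade the weighted $L^2$ control of $\partial Z^{\le2}u$ to pointwise decay $\lesssim\langle t\rangle^{-1}$, with an extra $\langle t-r\rangle$-gain near the light cone from $\mathcal{E}_3$ and $\mathcal{X}_3$. On $1/2\le r\le\langle t\rangle/2$ one has $\langle t-r\rangle\sim\langle t\rangle$, so $\mathcal{X}_3$ and the radial Sobolev inequality give $\langle t\rangle^{-1}$-decay of $\partial\nabla\nabla^{\le1}u$ there. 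On $r\le1/2$ the refined KSS estimate supplies the spacetime bounds $\mathcal{L}_3$ with the singular weights $r^{-1/4},r^{-5/4}$ of \eqref{loacljk}, again made pointwise through the radial Sobolev inequalities. Inserting these into the energy identity, the error term $\int|\partial^2u|\,|\partial Z^{\le2}u|^2$ and the lower-order quadratics are dominated by $\varepsilon$ times integrable weights (of $\langle t\rangle^{-1}$- or $\langle t-r\rangle^{-2}$-type) times $\mathcal{A}(t)^2$, producing the closed inequality. Since the classical local theory of \cite{MR0420024} needs more regularity than \eqref{label345newe} provides, the low-regularity solution itself is obtained by approximation: solve \eqref{EQwave}--\eqref{cauchydata1} with mollified, compactly supported radial data, note the bootstrap gives $X^3$-bounds uniform in the regularization parameter and in $T$, and pass to the limit; uniqueness follows from an energy estimate one derivative lower (at the $H^2\times H^1$ level) for the difference of two solutions, whose coefficients and source are controlled by the established bounds.

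The main obstacle is the top-order part of the quasilinear-type nonlinearity $N(u,u)\sim\partial^2u\cdot\partial u$ at the $H^3$ level --- one derivative below where the classical energy method is comfortable: one must simultaneously (i) use the divergence form and the null condition \eqref{null1100new} so that the commuted energy estimate loses no derivatives, and (ii) obtain \emph{uniform} time decay of the low-order factors across all three spatial regions, which is precisely why the ghost-weight, Klainerman--Sideris and refined KSS estimates have to be combined and why radial symmetry, through the strong weighted Sobolev inequalities it affords, is indispensable.
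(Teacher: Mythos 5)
Your overall architecture is the same as the paper's: reduce, via radial symmetry and the null condition \eqref{null1111234}, to the single-speed system \eqref{Cauchynew} with the null condition \eqref{null1100new}; run a ghost-weight energy identity with the top-order quasilinear term kept in divergence form and absorbed into a modified energy; split into the regions $r\geq \langle t\rangle/2$, $1/2\leq r\leq\langle t\rangle/2$, $r\leq 1/2$; and feed in the radial weighted Sobolev inequalities, the Klainerman--Sideris bound and the refined KSS bound, finishing with density and a contraction argument at one derivative lower. However, there is a genuine gap in the way you propose to close the bootstrap. First, the composite functional $\mathcal{A}(t)$ contains $\mathcal{M}_3^{1/2}(u(t))$ with no time weight, and a uniform bound $\mathcal{A}(T)\leq C_1\varepsilon+C_2\mathcal{A}(T)^2$ cannot hold: the global space-time norm in \eqref{morasdd} grows logarithmically in $t$ even for solutions of the \emph{free} wave equation, which is why in Lemma \ref{thmKSS00} the norms over the whole strip $S_t$ carry the prefactor $(\log(2+t))^{-1}$, and why the paper only proves $\mathcal{M}_3^{1/2}(u(t))\leq C(\log(2+t))^{1/2}(1+t)^{\delta}\sup_{0\leq\tau\leq t}E_3^{1/2}(u(\tau))$ (Proposition \ref{prop2233kss}); only the local quantity $\mathcal{L}_3$ of \eqref{loacljk} is uniformly controlled.

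Second, and relatedly, your claim that all error terms are dominated by ``$\varepsilon$ times integrable weights times $\mathcal{A}(t)^2$'' is not accurate for the borderline contributions from the regions $1/2\leq r\leq\langle t\rangle/2$ and $r\leq 1/2$: there the right-hand side of the energy identity \eqref{high} is bounded by $\langle t\rangle^{-1}E_3(u(t))\mathcal{N}_3^{1/2}(u(t))$ and $\langle t\rangle^{-1}E_3^{1/2}(u(t))\mathcal{N}_3(u(t))$, and $\langle t\rangle^{-1}$ is not integrable in time. These terms close only after an additional device: Cauchy--Schwarz in time over dyadic blocks $[2^k,2^{k+1}]$, the growth bound for $\mathcal{M}_3$ from Proposition \ref{prop2233kss}, and the summability of $\sum_k 2^{-k/2}2^{\delta k}(\log(2+2^{k+1}))^{1/2}$ for $\delta=\varepsilon_1/2$ small (this is the dyadic-decomposition-in-time step in the paper's conclusion, following Sogge). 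Without this step, or an equivalent Gronwall-with-logarithmic-loss argument, your scheme does not yield a closed inequality. The remedy is either to treat $\mathcal{M}_3$ separately with its $(\log(2+t))(1+t)^{2\delta}$ growth, as the paper does, or to insert the corresponding time weight into your functional; the rest of your outline (null-structure gains via the radial--angular decomposition, absorption of the ghost-weight term, and the approximation/uniqueness argument at the $H^2\times H^1$ level) is consistent with the paper's proof.
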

\begin{rem}
 In the smallness condition \eqref{label345newe} of the initial data, we only need a low weight $\langle x\rangle$. This is achieved by using the scaling operator \eqref{scalinfghj} only one time in our argument {\rm{(}}note that in the definitions of higher-order energies \eqref{kord}, \eqref{hkksss}, \eqref{morasdd} and \eqref{loacljk}, we restrict $a_4\leq 1${\rm{)}}.
\end{rem}
\section{Preliminaries}
In this section, first we will give some properties on the null condition and some weighted Sobolev inequalities, which will play key roles in the decay estimate under rather low regularity. Then we will give some a priori estimates on the Klainerman-Sideris type energy $\mathcal {X}_{3}(u(t))$ and the KSS type energy $\mathcal {M}_{3}(u(t))$. It turns out that they can all be controlled by the general energy $E_{3}(u(t))$, if it is sufficiently small.
\subsection{Null condition}

Denote the trilinear form
\begin{align}
\widetilde{N}(u,v,w)=g^{ijk}_{lmn}\partial_lu^{i}\partial_mv^{j}\partial_nw^{k}.
\end{align}

\begin{proposition}
If the null condition \eqref{null1100new} is satisfied, then for any $u,v,w\in C^{\infty}(\mathbb{R}^{3};\mathbb{R}^{3})$, we have
\begin{align}\label{xuyao1}
|\langle u,{N}(v,w)\rangle|&\leq \frac{C}{r}|u|\big(|\nabla \Omega v||\nabla w|+|\nabla \Omega w||\nabla v|+|\Omega v||\nabla^2 w|+|\Omega w||\nabla^2 v|\big),\\\label{xuyao2}
|\widetilde{N}(u,v,w)|&\leq \frac{C}{r}\big(|\Omega u||\nabla v||\nabla w|+|\nabla u||\Omega v||\nabla w|+|\nabla u||\nabla v||\Omega w|\big).
\end{align}
\begin{proof}
It follows from the radial-angular decomposition \eqref{angularradial} that we have the following pointwise equality
 \begin{align}\label{contr90}
\widetilde{N}(u,v,w)&=-g^{ijk}_{lmn}(\frac{\omega}{r}\wedge \Omega)_lu^{i}\partial_mv^{j}\partial_nw^{k}-g^{ijk}_{lmn}\omega_l\partial_ru^{i}(\frac{\omega}{r}\wedge \Omega)_mv^{j}\partial_nw^{k}\nonumber\\
&-g^{ijk}_{lmn}\omega_l\partial_ru^{i}\omega_m\partial_rv^{j}(\frac{\omega}{r}\wedge \Omega)_nw^{k}+g^{ijk}_{lmn}\omega_l\omega_m\omega_n\partial_ru^{i}\partial_rv^{j}\partial_rw^{k}.
 \end{align}
Thus \eqref{xuyao2} follows from \eqref{contr90} and the null condition \eqref{null1100new}.
\eqref{xuyao1} can be proved similarly.
\end{proof}
\end{proposition}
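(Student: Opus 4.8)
The proof of both inequalities uses only one device, the one already behind \eqref{contr90}: feed the radial--angular splitting \eqref{angularradial}, $\nabla=\omega\partial_r-\frac{\omega}{r}\wedge\Omega$, into every spatial derivative occurring in the expression, and then discard the purely radial piece by means of the null condition \eqref{null1100new}.

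For \eqref{xuyao2} this is immediate. Substituting \eqref{angularradial} successively for $\partial_l u^i$, $\partial_m v^j$ and $\partial_n w^k$ turns $\widetilde N(u,v,w)$ into the four terms displayed in \eqref{contr90}; the last one carries the scalar factor $g^{ijk}_{lmn}\omega_l\omega_m\omega_n$ and vanishes by \eqref{null1100new}, while each of the other three contains exactly one factor of the form $(\frac{\omega}{r}\wedge\Omega)$ applied to one of $u,v,w$. Estimating that factor by $\frac1r|\Omega(\cdot)|$ and each of the two remaining factors, which are of the type $\omega\partial_r(\cdot)$ or $(\frac{\omega}{r}\wedge\Omega)(\cdot)$, by $|\nabla(\cdot)|$ — legitimate since $|\partial_r f|\le|\nabla f|$ and $|(\frac{\omega}{r}\wedge\Omega)f|\le|\nabla f|$ by \eqref{angularradial} and orthogonality — yields \eqref{xuyao2} at once.

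For \eqref{xuyao1} I would start from
\[
\langle u,N(v,w)\rangle
=g^{ijk}_{lmn}u^i\partial_l\big(\partial_m v^j\partial_n w^k\big)
=g^{ijk}_{lmn}u^i\big(\partial_l\partial_m v^j\,\partial_n w^k+\partial_m v^j\,\partial_l\partial_n w^k\big),
\]
observe that the two summands are exchanged by the $v\leftrightarrow w$ symmetry built into \eqref{sym1new}, and hence treat only the first. I again apply \eqref{angularradial} to the three derivatives, iterating the substitution on the second--order factors. The one new feature compared with \eqref{contr90} is that $\partial_r$ may now hit the coefficients $\omega/r$; this is harmless, since $\partial_r\omega=0$ and $[\partial_r,\Omega]=0$, so such a hit merely produces one extra factor $1/r$ together with a $\partial_r$ or an $\Omega$ falling on $v$ or $w$, and similarly $[\Omega,\partial_m]$ is again a first--order constant--coefficient operator. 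After the expansion the fully radial term is $g^{ijk}_{lmn}\omega_l\omega_m\omega_n\,\partial_r^2 v^j\,\partial_r w^k$, which is killed by \eqref{null1100new}; every remaining term keeps at least one factor among $\Omega v$, $\Omega w$, $\Omega\partial_r v$, $\Omega\partial_r w$, and, after using $|\frac{\Omega}{r}f|\le|\nabla f|$, $|\partial_r f|\le|\nabla f|$, $|\Omega\partial_r f|=|\partial_r\Omega f|\le|\nabla\Omega f|$ and regrouping, is bounded by $\frac1r|u|$ times one of $|\nabla\Omega v||\nabla w|$, $|\nabla\Omega w||\nabla v|$, $|\Omega v||\nabla^2 w|$, $|\Omega w||\nabla^2 v|$. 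This gives \eqref{xuyao1}.

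The delicate point — and the one I expect to cost the most work — is that, unlike in \eqref{contr90} where the single offending term is manifestly the fully radial one, here the expansion also throws up terms in which the surviving $\Omega$ sits under an extra, a priori unabsorbable power of $1/r$ (schematically $\frac1{r^2}|\Omega v|\,|\nabla w|$, which one would have to rewrite as $\frac1r|\nabla v||\nabla w|$, a term not present on the right--hand side). One has to check that all such terms cancel, which forces one to use not merely $g^{ijk}_{lmn}\omega_l\omega_m\omega_n=0$ but the full set of symmetries \eqref{sym1new} of $g$ — together with the divergence form \eqref{cutifof99ggneww334} — so that the relevant traces and $\omega$--contractions of $g$ drop out. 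Everything else in \eqref{xuyao1} is just the bookkeeping of commutators and of derivatives landing on $\omega/r$.
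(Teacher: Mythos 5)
Your treatment of \eqref{xuyao2} is exactly the paper's argument (telescope \eqref{angularradial} through the three derivatives, kill the purely radial cubic term with \eqref{null1100new}, bound the angular factors by $\tfrac1r|\Omega(\cdot)|$), and it is correct. The problem is in \eqref{xuyao1}, precisely at the step you yourself flag and then defer: you assert, without proof, that the leftover terms carrying an extra power of $1/r$ (those produced when a derivative hits $\omega/r$, by $[\Omega,\nabla]$, and by $[\partial_r,\partial_m]$ — schematically $\tfrac1{r^2}|\Omega v||\nabla w|$, i.e.\ $\tfrac1r|\nabla v||\nabla w|$) all cancel thanks to \eqref{sym1new} and the divergence form \eqref{cutifof99ggneww334}. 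They do not. The null condition \eqref{null1100new} annihilates only the part of $g^{ijk}_{lmn}$ that is totally symmetric in $(l,m,n)$, whereas the leftover terms are trace contractions of the type $\bigl(g^{ijk}_{lln}+g^{ijk}_{lnl}\bigr)\omega_n$, which \eqref{sym1new} and \eqref{null1100new} leave unconstrained. Concretely, take $v=e_1\phi(r)$, $w=e_1\psi(r)$; then $\Omega v=\Omega w=0$, so the right-hand side of \eqref{xuyao1} vanishes identically, while (summing repeated indices and using \eqref{null1100new})
\begin{equation*}
\langle u,N(v,w)\rangle
= u^i\,g^{i11}_{lmn}\,\partial_l\bigl(\omega_m\omega_n\,\phi'\psi'\bigr)
=\frac{\phi'\psi'}{r}\,u^i\bigl(g^{i11}_{lln}+g^{i11}_{lnl}\bigr)\omega_n .
\end{equation*}
For $g^{ijk}_{lmn}=a^{ij}_{lm}\delta_{kn}+a^{jk}_{mn}\delta_{il}+a^{ki}_{nl}\delta_{jm}$ with $a^{ij}_{lm}=\delta_{il}\delta_{jm}-\delta_{im}\delta_{jl}$, which satisfies both \eqref{sym1new} and \eqref{null1100new}, the right-hand side above equals $-2\omega_2\phi'\psi'/r$ when $u=e_2$, so it is not zero; even a single genuine null form, $u^iQ_{ij}(\partial_kv^k,w^j)$, leaves the residue $\tfrac{\phi'\psi'}{r}(\delta_{i1}\omega_1-\omega_i)$ for these $v,w$. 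Hence no bookkeeping of symmetries can produce \eqref{xuyao1} exactly as written, and your proposed route to it fails at its key step.

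What your telescoping computation actually yields — and what the paper's laconic \lq\lq can be proved similarly\rq\rq\ should be read as — is the estimate with $|\Omega\nabla v|,|\Omega\nabla w|$ in place of $|\nabla\Omega v|,|\nabla\Omega w|$, equivalently \eqref{xuyao1} with the additional term $\tfrac Cr|u||\nabla v||\nabla w|$ on the right: after killing $g^{ijk}_{lmn}\omega_l\omega_m\omega_n\,\partial_r^2v^j\,\partial_rw^k$ by \eqref{null1100new}, the surviving terms are bounded by $\tfrac Cr|u|\bigl(|\Omega\nabla v||\nabla w|+|\nabla v||\Omega\nabla w|+|\Omega v||\nabla^2w|+|\Omega w||\nabla^2v|+\tfrac1r|\Omega v||\nabla w|+\tfrac1r|\nabla v||\Omega w|\bigr)$, and no cancellation is needed. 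You should prove and use that version: it is all the paper requires, since in the application one immediately invokes $\widetilde{\Omega}u=0$ and \eqref{compoi1}, under which the extra term is absorbed into $\tfrac Cr|\partial_tZ^au||\nabla Z^bu||\nabla Z^cu|$, which already appears in the final bound following \eqref{high}.
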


\begin{Lemma}\label{Nulll}
Assume that the null condition \eqref{null1100new} is satisfied and $u\in {X}^{3}(T)$~is a solution to \eqref{Cauchynew}.
Then for any multi-index $a=(a_1,a_2,a_3,a_4), a_4\leq 1$,
 we have
\begin{align}
\Box Z^{a}u=\sum_{b+c+d=a}N_{d}(Z^{b}u,Z^{c}u),
\end{align}
where each $N_{d}$ is a quadratic nonlinearity of the form \eqref{cutifof99ggneww334} satisfying the null condition \eqref{null1100new}
. Moreover, if $b+c=a$, then $N_{d}=N$.
\end{Lemma}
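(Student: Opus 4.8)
The plan is to prove the commutator identity for $Z^a u$ by induction on the length $|a|$ of the multi-index, exploiting the good commutation properties of the fields $Z=(\nabla,S)$ with the flat d'Alembertian $\Box=\partial_t^2-\Delta$. The base case $|a|=0$ is just the equation \eqref{Cauchynew}, $\Box u=N(u,u)$, which has the asserted form with $b=c=a=0$ and $d=0$, and in particular $N_d=N$. For the inductive step I would use the two elementary commutator relations $[\Box,\partial_i]=0$ for each spatial derivative $\partial_i$, and $[\Box,S]=2\Box$ for the scaling field. Thus applying a spatial derivative to $\Box Z^a u$ produces $\Box(\partial_i Z^a u)=\partial_i(\Box Z^a u)$, and applying $S$ produces $\Box(SZ^a u)=S(\Box Z^a u)+2\Box Z^a u$.

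The second step is to track what these operations do to the right-hand side. Suppose inductively that $\Box Z^a u=\sum_{b+c+d=a}N_d(Z^b u,Z^c u)$ with each $N_d$ a quadratic nonlinearity of the form \eqref{cutifof99ggneww334} satisfying the null condition \eqref{null1100new}. Since $N_d$ is bilinear and built from constant coefficients $g$ acting on $\partial_l,\partial_m,\partial_n$ of its arguments, applying $\partial_i$ distributes by the Leibniz rule onto the two slots: $\partial_i N_d(Z^b u,Z^c u)=N_d(\partial_i Z^b u,Z^c u)+N_d(Z^b u,\partial_i Z^c u)$; this is again a sum of terms of the required type, and the new multi-indices are $(b+e_i)+c+d$ and $b+(c+e_i)+d$, both summing to $a+e_i$. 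For the scaling field I would use that $S$ acting on the bilinear, translation-invariant, degree-$(1,1)$-in-derivatives expression $N_d(\cdot,\cdot)$ satisfies $S\,N_d(f,g)=N_d(Sf,g)+N_d(f,Sg)+2N_d(f,g)$ (the $+2$ absorbing the scaling weight of the two derivatives in $N_d$, after one checks the coefficient bookkeeping); combining with the extra $+2\Box Z^a u$ term, the constant multiples recombine so that one obtains exactly $\sum_{b+c+d=a+e_4}N_d(Z^b u,Z^c u)$. Here it is important that we only ever differentiate by $S$ at most once (the hypothesis $a_4\le 1$), so no higher commutator constants $[\Box,S^2]$ etc. intervene; the restriction $a_4\le1$ is used precisely to keep the inductive bookkeeping with the single extra factor of $S$ or $\nabla$ clean.

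The third point to verify is the preservation of the null structure: each new $N_d$ arising on the right is obtained from an old one by at most relabeling/differentiating the slots, so its coefficient tensor is still (a constant multiple of, or a contraction involving) $g^{ijk}_{lmn}$, and the null condition \eqref{null1100new}, being a statement purely about these constant coefficients contracted against $\omega_l\omega_m\omega_n$, is inherited verbatim. Finally, for the refined claim "if $b+c=a$ then $N_d=N$": the terms with $d=0$ are exactly those in which no vector field has landed on the coefficient/structure of $N$ itself — they arise only by the Leibniz rule distributing all of $Z^a$ onto the two arguments of the original $N(u,u)$ — and since the coefficient tensor is never touched in those terms, $N_d$ there is literally the original $N$; the terms with $d\ne0$ are the ones where the scaling field's extra $+2$ contribution (or, more precisely, the reorganization of constants) forces one copy of $Z$ to be "spent" on producing a modified constant tensor $N_d\ne N$, and those necessarily have $b+c=a-d\lneq a$.

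I expect the main obstacle to be the careful algebra of how $S$ interacts with the bilinear form $N_d$ and how the leftover $+2\Box Z^a u$ term gets absorbed into the sum — one has to set up the definition of the family $\{N_d\}$ (indexed by the "derivative deficit" $d$) with exactly the right scalar normalizations so that the identity $SN_d(f,g)=N_d(Sf,g)+N_d(f,Sg)+2N_d(f,g)$ holds and so that the recombination closes. The wave-equation and null-condition aspects are essentially automatic; the bookkeeping of multi-indices and constants is the only place where care is needed, and the hypothesis $a_4\le1$ is exactly what makes that bookkeeping finite and elementary.
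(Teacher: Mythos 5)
Your induction via the commutation identities $[\Box,\partial_i]=0$, $[\Box,S]=2\Box$ and the Leibniz rule on the constant-coefficient bilinear form is correct and is essentially the same argument as the standard proof the paper simply cites (H\"ormander's Lemma 6.6.5 and Sideris--Tu's Lemma 4.1), including the observation that the $d=0$ terms keep $N_d=N$ and that any constant multiple of $N$ is again of the form \eqref{cutifof99ggneww334} satisfying \eqref{null1100new}. The only slip is the scaling constant: since the divergence-form nonlinearity carries three derivatives, the correct identity is $S\,N_d(f,g)=N_d(Sf,g)+N_d(f,Sg)-3N_d(f,g)$ (not $+2$), so after combining with $[\Box,S]=2\Box$ the leftover term is $-N_d(Z^bu,Z^cu)$; this changes nothing in the conclusion, as the lemma only requires the new coefficient tensor to be a constant multiple of a null form.
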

\begin{proof}
See Lemma 6.6.5 of H\"{o}rmander \cite{MR1466700} and Lemma 4.1 of Sideris and Tu \cite{Sideris01}.
\end{proof}
\subsection{Weighted Sobolev inequalities}
\begin{Lemma}
For~$u\in C_{0}^{\infty}(\mathbb{R}^{3};\mathbb{R}^{3})$, we have
\begin{align}\label{weight1}
\|ru\|_{L^{\infty}}&\leq C\sum_{|a|\leq 1} \|\nabla \widetilde{\Omega}^au\|_{L^2}+C\sum_{|a|\leq 2} \| \widetilde{\Omega}^au\|_{L^2},  \\\label{weight2}
\|r^{1/2}u\|_{L^{\infty}}&\leq C\sum_{|a|\leq 1} \|\nabla \widetilde{\Omega}^au\|_{L^2},\\\label{charu}
\|r^{1/4}u\|_{L^{\infty}(r\leq 1)}&\leq C\sum_{|a|\leq 1}\|\langle r\rangle^{-1/4}r^{-1/4}\nabla\widetilde{\Omega}^{a}u\|_{L^2}+C\sum_{|a|\leq 1}\|\langle r\rangle^{-1/4}r^{-1/4}\widetilde{\Omega}^{a}u\|_{L^2}\nonumber\\
&+C\sum_{|a|\leq 1}\|\langle r\rangle^{-1/4}r^{-5/4}\widetilde{\Omega}^{a}u\|_{L^2},\\\label{weight3}
\|r\langle t-r \rangle u\|_{L^{\infty}}&\leq C\sum_{|a|\leq 1} \|\langle t-r\rangle\nabla \widetilde{\Omega}^au\|_{L^2}+C\sum_{|a|\leq 2} \|\langle t-r\rangle \widetilde{\Omega}^au\|_{L^2},\\\label{weight4}
\|r^{1/2}\langle t-r \rangle u\|_{L^{\infty}}&\leq C\sum_{|a|\leq 1}  \| \widetilde{\Omega}^au\|_{L^2}+C\sum_{|a|\leq 1} \|\langle t-r\rangle \nabla\widetilde{\Omega}^au\|_{L^2}, \\\label{weight5}
\|\langle t-r \rangle u\|_{L^{\infty}}&\leq C\|\nabla u\|_{L^2}+ C\|\langle t-r\rangle\nabla^2 u\|_{L^2}.
\end{align}
\end{Lemma}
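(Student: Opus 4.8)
The plan is to prove all six inequalities by one scheme: reduce the simultaneous rotations $\widetilde\Omega$ to the ordinary angular momenta $\Omega$; use a two–dimensional Sobolev embedding on the spheres $\{|x|=r\}$ to get $L^\infty$–control in the angular variable; use the one–dimensional fundamental theorem of calculus in $r$, integrated against the appropriate weight, to get $L^\infty$–control in $r$; and finally re‑assemble the resulting one‑dimensional and spherical $L^2$ integrals into $L^2(\mathbb R^3)$ norms by Cauchy–Schwarz together with the three‑dimensional Hardy inequality $\|r^{-1}f\|_{L^2(\mathbb R^3)}\le C\|\nabla f\|_{L^2(\mathbb R^3)}$.

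First, since $\widetilde\Omega_i=\Omega_iI+U_i$ with $U_i$ a fixed antisymmetric matrix, one has the pointwise bounds $\sum_{|b|\le k}|\Omega^bu|\le C\sum_{|a|\le k}|\widetilde\Omega^au|$ for $k\le 2$ (and likewise with an extra $\nabla$), so it suffices to prove each inequality with $\Omega$ in place of $\widetilde\Omega$, at the cost of lower‑order terms already present on the right. The commutation relations $[\Omega,\partial_r]=0$ and $[\Omega_i,\partial_j]=-\varepsilon_{ijk}\partial_k$ then let one move angular derivatives freely, and combined with the decomposition \eqref{angularradial} they give the crucial identity $|\nabla f|^2=|\partial_rf|^2+r^{-2}|\Omega f|^2$; hence $\|\nabla\Omega^bu\|_{L^2}$ already controls $r^{-1}$ times one further angular derivative of $\Omega^bu$ in $L^2$. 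This is what allows \eqref{weight2}, \eqref{weight4} and the interior estimate \eqref{charu} to get away with only $\nabla\widetilde\Omega^au$, $|a|\le 1$, on the right: the second angular derivative needed for the Sobolev embedding on the sphere is taken from the angular part $-\tfrac{\omega}{r}\wedge\Omega$ of $\nabla$, and the factor $r^{-1}$ it produces is affordable precisely because the spatial weight on the left ($r^{1/2}$, resp. $r^{1/4}$ on $r\le 1$) is weaker than the full weight $r$ of \eqref{weight1}.

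For each fixed $r$ one regards $u(r\,\cdot)$ as a function on $S^2$ and applies a two–dimensional Sobolev/Agmon inequality $\|g\|_{L^\infty(S^2)}^2\le C\|g\|_{L^2(S^2)}\big(\|g\|_{L^2(S^2)}+\|\Omega^2g\|_{L^2(S^2)}\big)$; equivalently, one expands $u$ in spherical harmonics, uses $\|Y_\ell\|_{L^\infty(S^2)}\le C\ell^{1/2}\|Y_\ell\|_{L^2(S^2)}$ and $\|\Omega^kY_\ell\|_{L^2(S^2)}\sim\ell^k\|Y_\ell\|_{L^2(S^2)}$, treats each mode by a one‑dimensional estimate in $r$, and sums in $\ell$ by Cauchy–Schwarz, the $\Omega$–derivatives supplying enough $\ell$–decay to offset the $\ell^{1/2}$ growth of $\|Y_\ell\|_{L^\infty}$. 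This bounds $|u(r\omega)|^2$ by a product of spherical $L^2$ quantities; multiplying by the weight and running the fundamental theorem of calculus in $r$ — from $s=+\infty$ for the global estimates \eqref{weight1}–\eqref{weight5}, and from $s=0$ for \eqref{charu} — produces a lower‑order term (where $\partial_s$ falls on the power weight $s^\alpha$, using also $|\partial_s\langle t-s\rangle|\le 1$ to carry the factor $\langle t-s\rangle$ through in \eqref{weight3}–\eqref{weight5}) and a cross term of the shape $\int|\,\cdot\,|\,|\partial_s(\,\cdot\,)|$. The cross term is handled by Cauchy–Schwarz, and then every $\int_{S^2}\!\int_0^\infty$ integral that appears is recognised, after restoring the volume element $s^2\,ds\,d\omega$ and using $|\partial_su|\le|\nabla u|$ and Hardy where needed, as one of the weighted $L^2(\mathbb R^3)$ norms on the right. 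In \eqref{charu} the boundary value of $s^{1/2}|\widetilde\Omega^au|^2$ at $s=0$ vanishes (as $u$ is smooth there), the lower‑order term is exactly $\int_0^1 s^{-1/2}|\widetilde\Omega^au|^2\,ds=\|r^{-5/4}\widetilde\Omega^au\|_{L^2(|x|\le 1)}^2$, and the cross term splits as $\|r^{-5/4}\widetilde\Omega^au\|_{L^2(|x|\le 1)}\,\|r^{-1/4}\partial\widetilde\Omega^au\|_{L^2(|x|\le 1)}$ — which is what dictates the otherwise curious weights $r^{-1/4},r^{-5/4}$.

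The main obstacle is the precise bookkeeping of weights and of the number of vector fields: one must run the radial FTC against exactly the power of $r$ that makes the lower‑order and cross terms close against the stated right‑hand sides, and one must be careful that the second angular derivative extracted from $\nabla$ never forces a genuinely third‑order object such as $\partial_r\Omega^2u$ to appear — which is why the radial variable should be integrated out before, not after, taking the supremum (equivalently, why the spherical‑harmonic argument with summation in $\ell$ is the convenient route). The interior estimate \eqref{charu} is the most delicate point, since there the FTC is anchored at the origin and the weights $\langle r\rangle^{-1/4},r^{-1/4},r^{-5/4}$ are forced rather than chosen; by contrast the $\langle t-r\rangle$–weighted estimates \eqref{weight3}–\eqref{weight5} are routine once the base scheme is in place, the only new ingredient being the harmless replacement of the power weight by $\langle t-r\rangle$ via $|\partial_r\langle t-r\rangle|\le 1$, and the first term $\|\nabla u\|_{L^2}$ in \eqref{weight5} being exactly what absorbs the lower‑order contribution there.
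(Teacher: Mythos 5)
Your scheme (reduce $\widetilde{\Omega}$ to $\Omega$, spherical Sobolev/Agmon or spherical-harmonic decomposition on $S^2$, radial fundamental theorem of calculus against the weight, Cauchy--Schwarz plus Hardy) is essentially the standard argument behind these estimates, and for \eqref{weight1}--\eqref{weight4} and \eqref{charu} it can be made to work; in particular you correctly identify the one real pitfall (a third-order term such as $\partial_r\Omega^2u$ appearing if the spherical embedding and the radial integration are combined naively) and the mode-by-mode summation in $\ell$ is a legitimate way around it. Be aware, though, that the paper does not reprove these inequalities at all: \eqref{weight1}--\eqref{weight3} are quoted from Sideris, \eqref{weight4} from Hidano, \eqref{weight5} from Hidano and Zha, and \eqref{charu} is obtained in one line from \eqref{weight2} by substituting $\langle r\rangle^{-1/4}r^{-1/4}u$ for $u$ (the radial weight commutes with $\widetilde{\Omega}$, and the derivative falling on the weight produces exactly the $r^{-5/4}$ term). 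Your direct FTC-at-the-origin derivation of \eqref{charu} is correct in outline, but the paper's substitution trick is shorter and explains the otherwise ``curious'' weights more transparently.

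The genuine gap is your treatment of \eqref{weight5}, which you declare routine. Your scheme cannot deliver it: the left-hand side carries no positive power of $r$, so the FTC-from-infinity device, which trades a power of $r$ on the left for integrability on the right, gives nothing near the origin; and the lower-order and cross terms it generates carry the weight $\langle t-r\rangle$ together with a factor $r^{-1}$, e.g.\ quantities of the type $\|\langle t-r\rangle u/r\|_{L^2}$, which are \emph{not} bounded by $\|\nabla u\|_{L^2}+\|\langle t-r\rangle\nabla^2u\|_{L^2}$ --- Hardy gives $\|u/r\|_{L^2}\leq C\|\nabla u\|_{L^2}$ only without the weight, so the claim that ``$\|\nabla u\|_{L^2}$ absorbs the lower-order contribution'' is precisely the step that fails. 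A scaling test makes the obstruction concrete: for $u(x)=\chi(x/R)e_1$ with a fixed cutoff $\chi$ and $t=0$, the left side of \eqref{weight5} is of size $R$ while $\|\nabla u\|_{L^2}+\|\langle t-r\rangle\nabla^2u\|_{L^2}$ is of size $R^{1/2}$, so no rearrangement of your argument closes with exactly this right-hand side; an inequality of this type needs either a $\langle t-r\rangle$ weight on the first-order norm or the product (Gagliardo--Nirenberg/Agmon) structure of the estimates in the cited sources, and the paper sidesteps the issue entirely by invoking (2.13) of Hidano and (37) of Zha rather than arguing as you propose. You should either reproduce the cited proof in its precise form or restrict your ``routine'' claim to \eqref{weight3}--\eqref{weight4}.
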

\begin{proof}
For \eqref{weight1}, \eqref{weight2} and \eqref{weight3}, see Lemma 3.3 of Sideris \cite{Sideris00}. For \eqref{charu}, noting that
\begin{align}
\|r^{1/4}u\|_{L^{\infty}(r\leq 1)}\leq C\|r^{1/2}\langle r\rangle^{-1/4}r^{-1/4}u\|_{L^{\infty}(\mathbb{R}^{3})},
\end{align}
we can prove \eqref{charu} by replacing $u$ by $\langle r\rangle^{-1/4} r^{-1/4} u$ in \eqref{weight2}.
 \eqref{weight4} can be found in Lemma 4.1 of Hidano \cite{MR2053322}.  As for \eqref{weight5}, see (2.13) of Hidano \cite{Hidano} and (37) of Zha \cite{MR3549405}.
\end{proof}
\begin{proposition}
For radially symmetric~$u\in C_{0}^{\infty}(\mathbb{R}^{3};\mathbb{R}^{3})$, we have
\begin{align}\label{weight11}
\|r\partial Z^au\|_{L^{\infty}}&\leq CE_{3}^{1/2}(u(t)),~|a|\leq 1,  \\\label{weight22}
\|r^{1/2}Z^au\|_{L^{\infty}}&\leq  CE_{3}^{1/2}(u(t)),~|a|\leq 2,a_4\leq 1,\\\label{charu00}
\|r^{1/4}\nabla Z^{a}u\|_{L^{\infty}(r\leq 1)}&\leq C\sum_{|b|\leq 1}\|\langle r\rangle^{-1/4}r^{-1/4}\nabla^2 Z^{b}u\|_{L^2}+C\sum_{|b|\leq 1}\|\langle r\rangle^{-1/4}r^{-1/4}\nabla Z^{b}u\|_{L^2}\nonumber\\
&+C\sum_{|b|\leq 1}\|\langle r\rangle^{-1/4}r^{-5/4}\nabla Z^{b}u\|_{L^2},~|a|\leq 1, \\\label{weight33}
\|r\langle t-r \rangle\partial \nabla u\|_{L^{\infty}}&\leq C\mathcal {X}_{3}(u(t)),\\\label{weight44}
\|r^{1/2}\langle t-r \rangle \partial Z^au\|_{L^{\infty}}&\leq CE_{2}^{1/2}(u(t))+C\mathcal {X}_{3}(u(t)),~|a|\leq 1,  a_4=0, \\\label{weight55}
\|\langle t-r \rangle\nabla u\|_{L^{\infty}}&\leq CE_{2}^{1/2}(u(t))+C\mathcal {X}_{3}(u(t)).
\end{align}
\end{proposition}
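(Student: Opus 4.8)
The plan is to derive each of \eqref{weight11}--\eqref{weight55} from the corresponding inequality of the preceding Lemma, applied not to $u$ itself but to a suitable derivative of $u$ ($\partial Z^{a}u$, $Z^{a}u$, $\nabla Z^{a}u$ or $\nabla u$), and then to dispose of the simultaneous--rotation derivatives $\widetilde{\Omega}$ that appear on the right-hand sides by exploiting radial symmetry. The single ingredient that drives everything is the observation that the generators of simultaneous rotations \emph{annihilate radial vector fields}: writing $u(t,x)=x\psi(t,r)$ as in \eqref{biao}, a direct computation from the explicit forms of $\Omega_{i}$ and $U_{i}$ in \eqref{rotations111}--\eqref{111222}, together with $\Omega_{i}\psi=0$, gives $\widetilde{\Omega}_{i}u=0$ for $i=1,2,3$.

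First I would combine $\widetilde{\Omega}_{i}u=0$ with the commutation relations \eqref{compoi1} and the trivial identity $[\widetilde{\Omega},\partial_{t}]=0$. Since $\widetilde{\Omega}$ commutes with $\partial_{t}$ and with $S$, and commuting it past a $\nabla$ reproduces only $\nabla$'s (by $[\widetilde{\Omega},\nabla]=\nabla$), every $\widetilde{\Omega}$ can be pushed to the right until it reaches $u$ and is killed. Hence for radial $u$ and any multi-indices $a,b$, the quantity $\widetilde{\Omega}^{b}(\partial Z^{a}u)$ is a finite linear combination of terms $\partial' Z^{a'}u$ with $\partial'$ one of $\partial_{t},\nabla_{1},\nabla_{2},\nabla_{3}$, $|a'|=|a|$ and $a'_{4}=a_{4}$; likewise $\widetilde{\Omega}^{b}(Z^{a}u)$ and $\widetilde{\Omega}^{b}(\nabla Z^{a}u)$ reduce to linear combinations of $Z^{a'}u$ resp.\ $\nabla Z^{a'}u$ of the same order. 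The point is that $\widetilde{\Omega}$ \emph{does not raise the differentiation order}, so nothing worse than third-order derivatives of $u$ ever enters.

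Granting this, the six inequalities follow essentially mechanically. For \eqref{weight11} apply \eqref{weight1} with $u$ replaced by $\partial Z^{a}u$, $|a|\le 1$; after absorbing the $\widetilde{\Omega}$'s and the one extra outer $\nabla$ coming from \eqref{weight1}, the right-hand side is a sum of norms $\|\partial Z^{a'}u\|_{L^{2}}$ with $|a'|\le 2$, $a'_{4}\le 1$, and since $E_{1}(Z^{a'}u)=\tfrac12\|\partial Z^{a'}u\|_{L^{2}}^{2}$ this is $\le CE_{3}^{1/2}(u(t))$. For \eqref{weight22} apply \eqref{weight2} with $u\mapsto Z^{a}u$, $|a|\le 2$, $a_{4}\le 1$, and note that $\nabla Z^{a'}u$ is exactly what the $E_{1}(Z^{a'}u)$-term bounds, hence $\le CE_{3}^{1/2}(u(t))$. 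For \eqref{charu00} apply \eqref{charu} with $u\mapsto\nabla Z^{a}u$, $|a|\le 1$; the three terms on its right collapse precisely into the three sums on the right of \eqref{charu00}. For \eqref{weight33} apply \eqref{weight3} with $u\mapsto\partial\nabla u$; the collapsed right-hand side consists of the norms $\|\langle t-r\rangle\partial\nabla\nabla^{a}u\|_{L^{2}}$ with $|a|\le 1$, i.e.\ $\mathcal{X}_{3}(u(t))$. For \eqref{weight44} apply \eqref{weight4} with $u\mapsto\partial Z^{a}u$, $|a|\le 1$, $a_{4}=0$; the unweighted part collapses into $E_{2}^{1/2}(u(t))$ and the weighted part into $\mathcal{X}_{3}(u(t))$. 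Finally \eqref{weight55} is immediate from \eqref{weight5}, which involves no rotations at all, taking $u\mapsto\nabla u$ and bounding $\|\nabla^{2}u\|_{L^{2}}\le CE_{2}^{1/2}(u(t))$ and $\|\langle t-r\rangle\nabla^{3}u\|_{L^{2}}\le C\mathcal{X}_{3}(u(t))$.

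The step requiring genuine care---rather than a real obstacle---is the order-bookkeeping in the previous paragraph: in each case one must check that, once the $\widetilde{\Omega}$'s are removed and any extra outer $\nabla$ from the cited inequality is accounted for, the surviving $L^{2}$-norms involve at most two $Z$-derivatives with fourth index $\le 1$ (hence belong to $E_{3}$ or $E_{2}$), or are of the shape $\langle t-r\rangle\partial\nabla\nabla^{a}u$ with $|a|\le 1$ (hence belong to $\mathcal{X}_{3}$). Because the preceding Lemma is already stated for $C_{0}^{\infty}$ functions, no approximation is needed at this stage; the passage to the solution class $X^{3}_{\rm rad}(T)$ is performed by the usual density argument when the Proposition is invoked.
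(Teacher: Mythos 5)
Your proposal is correct and follows essentially the same route as the paper: apply the weighted Sobolev inequalities of the preceding Lemma to $\partial Z^{a}u$, $Z^{a}u$, $\nabla Z^{a}u$, $\nabla u$, and use radial symmetry ($\widetilde{\Omega}u=0$) together with the commutation relations \eqref{compoi1} to remove the $\widetilde{\Omega}$'s, leaving norms controlled by $E_{3}$, $E_{2}$ or $\mathcal{X}_{3}$. Your explicit order-bookkeeping simply spells out what the paper's terse proof leaves implicit.
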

\begin{proof}
\eqref{weight55} is a direct application of \eqref{weight5}.  Since $u$ is radially symmetric, $\widetilde{\Omega}u=0$. Noting the commutation relationship \eqref{compoi1}, we can derive \eqref{weight11}--\eqref{weight44} from \eqref{weight1}--\eqref{weight4}, respectively.
\end{proof}

\subsection{Klainerman-Sideris bound}
\begin{Lemma}\label{ks}
Let radially symmetric $u\in C^{\infty}([0,T);C_{0}^{\infty}(\mathbb{R}^3;\mathbb{R}^3))$. Then
\begin{align}\label{non}
{\mathcal {X}}_3(u(t))\leq C E^{1/2}_{3}(u(t))+Ct\sum_{|a|\leq 1}\|\Box \nabla^{a}u(t)\|_{L^{2}(\mathbb{R}^3)}.
\end{align}
\end{Lemma}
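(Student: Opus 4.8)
The plan is to prove the Klainerman--Sideris inequality \eqref{non} for radially symmetric functions by exploiting the classical identity that expresses $\langle t-r\rangle \partial\nabla^a u$ in terms of $\Box u$, the scaling vector field $S$, and the angular momentum operators $\Omega$. The starting point is the algebraic identity (see Klainerman \cite{Klainerman96}, or Sideris \cite{Sideris00}) valid away from the light cone: for a scalar function $w$,
\begin{align}\label{ksident}
(t-r)(t+r)\Box w = \big((S - n + 1)(S + n - 1) - \Delta_\omega\big)w - 2r\partial_r\big(\text{lower order}\big),
\end{align}
or more precisely the standard rearrangement giving
\begin{align}\label{ksbound}
\langle t-r\rangle |\partial^2 w| \lesssim |\partial Sw| + \sum_{j}|\partial\Omega_j w| + |\partial w| + \langle t+r\rangle |\Box w|,
\end{align}
valid on the region $r \geq t/2$ (say), with a complementary elementary estimate on $r \leq t/2$ where $\langle t-r\rangle \sim \langle t\rangle \sim \langle t+r\rangle$ and one simply writes $\langle t-r\rangle|\partial\nabla^a u| \lesssim t|\partial\nabla^a u| + |\partial\nabla^a u|$, absorbing the second term into $E_3^{1/2}$. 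In the radial case the angular derivatives $\Omega_j u$ \emph{vanish}, which is exactly the simplification that makes the low-weight argument work: the right-hand side of \eqref{ksbound} then only involves $S$ applied once and plain spatial derivatives, consistent with the restriction $a_4 \leq 1$ built into $E_3$.

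The key steps, in order: First I would reduce to the scalar radial components, writing $u = x\psi(t,r)$ as in \eqref{biao}, or more simply work componentwise with $u^i$ which are radial-type functions, and recall that for radial $u$ one has $\Omega_j u = 0$ and $\nabla u = \omega \partial_r u$. Second, I would invoke the pointwise identity relating $\langle t-r\rangle\partial\nabla^a u$ (for $|a| \leq 1$, so $\nabla^a u$ is $u$ or $\partial_i u$) to $\partial S(\nabla^a u)$, $\partial(\nabla^a u)$, and $\langle t+r\rangle \Box(\nabla^a u)$ on the region $r \geq t/2$; since $[S,\partial_i] = -\partial_i$, commuting $S$ past the one spatial derivative costs nothing beyond harmless constants. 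Third, I would take $L^2$ norms: the terms $\|\partial S\nabla^a u\|_{L^2}$ and $\|\partial\nabla^a u\|_{L^2}$ for $|a|\leq 1$ are all bounded by $E_3^{1/2}(u(t))$ by definition \eqref{kord} (using $|a| \leq 1$ spatial derivatives plus at most one $S$, so total order $\leq 2$ with $a_4 \leq 1$); the term $\langle t+r\rangle\|\Box\nabla^a u\|_{L^2}$ on $r \geq t/2$ is dominated by $Ct\|\Box\nabla^a u\|_{L^2} + C\|r\Box\nabla^a u\|_{L^2}$, but on the support under consideration the weight is comparable to $t$, giving $Ct\sum_{|a|\leq 1}\|\Box\nabla^a u\|_{L^2}$. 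Fourth, on $r \leq t/2$, I would use the trivial bound $\langle t-r\rangle \leq \langle t\rangle \lesssim 1 + t$ so $\|\langle t-r\rangle\partial\nabla\nabla^a u\|_{L^2(r\leq t/2)} \lesssim \|\partial\nabla\nabla^a u\|_{L^2} + t\|\partial\nabla\nabla^a u\|_{L^2}$; the first summand lies in $E_3^{1/2}$ (order $\leq 2$ since $|a|\leq 1$ in the definition of $\mathcal X_3$) and for the second I would use the equation $\Box u = N(u,u)$ is not yet invoked here — rather, I keep $t\|\partial\nabla\nabla^a u\|$ and convert it via $\partial_t^2 = \Delta + \Box$ so that $t\|\partial_t\nabla\nabla^a u\|$ and $t\|\nabla^2\nabla^a u\|$ reduce to $t\|\Box\nabla^a u\| + t\|\Delta\nabla^a u\|$, and $\Delta\nabla^a u$ is again controlled — here one must be slightly careful, and the cleanest route is the standard one: $t\|\nabla^2\nabla^a u\|_{L^2(r\leq t/2)}$ is handled by the elliptic-type identity $\Delta = \partial_r^2 + \tfrac{2}{r}\partial_r + \tfrac{1}{r^2}\Delta_\omega$, which for radial functions and on $r \lesssim t$ together with Hardy's inequality brings in only $\Box$ and first-order terms.

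\textbf{Main obstacle.} The delicate point is the region $r \leq t/2$ (equivalently, controlling $\langle t-r\rangle\partial\nabla\nabla^a u$ where $\langle t-r\rangle$ is large but $r$ is small or moderate), because there the weight $\langle t-r\rangle$ does \emph{not} see the good structure of the light cone and one genuinely pays a factor of $t$. The resolution is that this is exactly the $t\sum_{|a|\leq 1}\|\Box\nabla^a u\|_{L^2}$ term allowed on the right-hand side of \eqref{non}: one uses $\langle t-r\rangle \lesssim t$ on $r \leq t/2$, expresses the second-order derivatives $\partial\nabla$ through $\Box$ and the scaling/rotation fields via the same identity \eqref{ksident} but now noting $\langle t+r\rangle \sim t$, and checks that no term requires more than one $S$ or more than second-order $Z$-derivatives, so everything closes inside $E_3$ and the claimed $t\|\Box\nabla^a u\|$ term. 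I would double-check the commutator bookkeeping $[S,\Box] = -2\Box$ and $[S,\partial_i]=-\partial_i$ to be sure the constants and the index count ($|a|\leq 1$ on $\Box\nabla^a u$, matching $\mathcal X_3$'s $|a|\leq k-2 = 1$) are exactly right; this is routine but is where an off-by-one in regularity would hide.
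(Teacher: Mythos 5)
Your overall route---derive \eqref{non} from the Klainerman--Sideris inequality for the fields $\partial,\Omega,S$ applied to $\nabla^a u$, $|a|\leq 1$, and then use radial symmetry to discard the rotation terms so that only $E_3$ as defined in \eqref{kord} appears---is exactly the route the paper takes; in fact the paper's ``proof'' is nothing but a citation of Lemma 3.1 of Klainerman--Sideris \cite{Klainerman96}. However, two specific claims in your write-up are false as stated and need repair. First, for a radially symmetric \emph{vector field} $u(t,x)=x\psi(t,r)$ the componentwise angular momenta do not vanish: $\Omega_j u\neq 0$ in general (e.g. $\Omega_1 x=(0,-x_3,x_2)$), the components $u^i=x_i\psi$ are not radial scalars, and $\nabla u\neq\omega\,\partial_r u$. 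What vanishes is the simultaneous rotation, $\widetilde{\Omega}_j u=0$, so by \eqref{111222} one has $\Omega_j u=-U_j u$, a zeroth-order term; after commuting with $\nabla^a$ (the commutator of $\Omega$ with a spatial derivative is again a spatial derivative, cf.\ \eqref{compoi1} for $\widetilde{\Omega}$) the rotation contributions $\|\partial\Omega\nabla^a u\|_{L^2}$ are still bounded by $CE_3^{1/2}(u(t))$. So your conclusion survives, but not for the reason you give, and this distinction between $\Omega$ and $\widetilde{\Omega}$ is precisely why the paper introduces $\widetilde{\Omega}$ at all.

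Second, on the region $r\geq t/2$ the weight $\langle t+r\rangle$ is \emph{not} comparable to $t$: it is of size $\langle r\rangle$, which may be arbitrarily large compared with $t$. Hence taking $L^2$ norms of a pointwise bound carrying $\langle t+r\rangle|\Box w|$ yields $\|\langle t+r\rangle\Box\nabla^a u\|_{L^2}$, not the term $Ct\sum_{|a|\leq 1}\|\Box\nabla^a u\|_{L^2}$ asserted in \eqref{non}. The sharper estimate with exactly the factor $t$ is the content of Lemma 3.1 of \cite{Klainerman96}; its proof exploits that the quantity being bounded in $\mathcal{X}_3$ is $\langle t-r\rangle\partial\nabla\nabla^a u$ (at least one derivative is spatial, so the dangerous pure $\partial_t^2$ component with weight $\sim r$ in the far exterior never occurs), together with an absorption argument in the interior region; it is genuinely more than the ``standard rearrangement'' you wrote down. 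Since you cite \cite{Klainerman96} anyway, the clean fix is to invoke their Lemma 3.1 verbatim for $\nabla^a u$, $|a|\leq 1$, and supply only the radial-symmetry reduction of the $\Omega$-terms described above---which is exactly what the paper's one-line proof implicitly does.
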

\begin{proof}
See Lemma 3.1 of Klainerman and Sideris \cite{Klainerman96}.
\end{proof}
\begin{proposition}\label{prop2233}
 Let $u$ be a radially symmetric and smooth solution to equation \eqref{Cauchynew}.  Assume that
\begin{align}\label{small234}
\varepsilon_1\equiv \sup_{0\leq t<T}E^{1/2}_{3}(u(t))
\end{align}
is sufficiently small. Then for $0\leq t< T$, we have
\begin{align}\label{high1}
{\mathcal {X}}_3(u(t))\leq CE_{3}^{1/2}(u(t)).
\end{align}
\end{proposition}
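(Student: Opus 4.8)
The plan is to bootstrap from the abstract Klainerman--Sideris bound \eqref{non} of Lemma \ref{ks} by estimating the inhomogeneous term $t\sum_{|a|\le 1}\|\Box\nabla^a u(t)\|_{L^2}$ using the equation \eqref{Cauchynew} together with the null-form structure \eqref{xuyao1}, the weighted Sobolev inequalities of the previous subsection, and the definition of ${\mathcal X}_3$ itself --- absorbing the resulting ${\mathcal X}_3$ term on the left once $\varepsilon_1$ is small. First I would invoke Lemma \ref{Nulll}: for $|a|\le 1$ (so automatically $a_4\le 1$) we have $\Box\nabla^a u=\sum_{b+c=a}N(\nabla^b u,\nabla^c u)$, a sum of null-form quadratic terms. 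So it suffices to bound $t\|N(\nabla^b u,\nabla^c u)\|_{L^2}$ for $|b|+|c|\le 1$.

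The key step is the pointwise null-form gain. By \eqref{xuyao1} (applied with $u$ there replaced by a unit vector, i.e. using the bilinear version), $|N(v,w)|\le \frac{C}{r}\big(|\nabla\Omega v||\nabla w|+|\nabla\Omega w||\nabla v|+|\Omega v||\nabla^2 w|+|\Omega w||\nabla^2 v|\big)$. Since $u$ is radially symmetric, $\Omega u=0$; but the derivatives $\nabla^b u$ appearing here need not be, so I keep the $\Omega$ factors. Using \eqref{haohaohao}, each $|\Omega(\cdot)/r|$ is controlled by $|T(\cdot)|$, a good derivative, which is where the $\langle t-r\rangle$-decay enters. The strategy in each product is to put the factor carrying the extra $\Omega$ (equivalently, the good-derivative/$\langle t-r\rangle^{-1}$-type decay) in $L^\infty$ via the weighted Sobolev estimates \eqref{weight44}/\eqref{weight55} (which cost $E_2^{1/2}+{\mathcal X}_3$ and supply a factor $\langle t-r\rangle^{-1}$, and on $r\gtrsim t$ an extra $r^{-1/2}$), and keep the remaining, higher-derivative factor in $L^2$, controlled either by $E_3^{1/2}$ or, when it comes weighted by $\langle t-r\rangle$, directly by ${\mathcal X}_3$. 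The factor $1/r$ from \eqref{xuyao1} combines with $\langle t-r\rangle^{-1}$ from the $L^\infty$ factor and with the prefactor $t$: on the region $r\ge t/2$ one has $t/r\lesssim 1$ and the $r^{-1/2}$ from \eqref{weight44} together with an $r^{1/2}$ pulled off the $L^2$ factor (via \eqref{weight22}) closes things, while on $r\le t/2$ one has $\langle t-r\rangle\sim \langle t\rangle\gtrsim t$, so $t\langle t-r\rangle^{-1}\lesssim 1$ and the remaining $1/r$ is absorbed by writing the $L^2$ factor as $\langle t-r\rangle\partial\nabla(\cdots)$, i.e. as part of ${\mathcal X}_3$, after handling the $1/r$ singularity near the origin by the Hardy/trace-type bounds implicit in \eqref{weight22}. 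Schematically this yields
\begin{align}\label{plan-key}
t\big\|N(\nabla^b u,\nabla^c u)\big\|_{L^2}\le C\big(E_2^{1/2}(u(t))+{\mathcal X}_3(u(t))\big)\,E_3^{1/2}(u(t))\le C\varepsilon_1\big(E_3^{1/2}(u(t))+{\mathcal X}_3(u(t))\big),
\end{align}
using $E_2\le E_3$ and the smallness \eqref{small234}.

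Combining \eqref{plan-key} with \eqref{non} gives ${\mathcal X}_3(u(t))\le CE_3^{1/2}(u(t))+C\varepsilon_1{\mathcal X}_3(u(t))$, and choosing $\varepsilon_1$ small enough that $C\varepsilon_1\le 1/2$ lets us absorb the last term, yielding \eqref{high1}. Finally, the hypothesis is stated for smooth compactly supported $u$, so the estimate for a general solution $u\in X^3_{\rm rad}(T)$ follows by the usual density argument built into the definition of $X^3(T)$.

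The main obstacle I anticipate is the bookkeeping near $r=0$: the null-form bound \eqref{xuyao1} produces a factor $1/r$, and to convert this into something controlled by ${\mathcal X}_3$ (which weights by $\langle t-r\rangle$, not by $r^{-1}$) one must exploit radial symmetry of $u$ to gain an extra power of $r$ from the $L^2$ factor --- essentially a one-dimensional Hardy inequality in the radial variable, as is already packaged into \eqref{weight22}. Getting the exponents to match simultaneously on $r\le 1$, $1\le r\le t/2$, and $r\ge t/2$, while never using the scaling vector field more than once (so that only the low weight $\langle x\rangle$ is needed), is the delicate point; everything else is a routine product estimate.
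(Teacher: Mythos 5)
Your overall skeleton is the paper's: start from the Klainerman--Sideris bound \eqref{non}, estimate $t\sum_{|a|\le 1}\|\Box\nabla^a u\|_{L^2}$ by $C\big(E_2^{1/2}+\mathcal{X}_3\big)E_3^{1/2}$, and absorb the resulting $C\varepsilon_1\mathcal{X}_3$ term using \eqref{small234}. But your treatment of the source term takes a genuinely different route: you invoke the null structure (Lemma \ref{Nulll}, \eqref{xuyao1}, \eqref{haohaohao}), whereas the paper's proof of this proposition never uses the null condition at all. It simply writes $\|N(u,\nabla^a u)\|_{L^2}\le C\|\nabla u\,\nabla^2\nabla^a u\|_{L^2}+C\|\nabla^2 u\,\nabla\nabla^a u\|_{L^2}$ (using \eqref{symmert} to reduce to $b=0$, $c=a$) and splits into $r\ge\langle t\rangle/2$, where \eqref{weight11} converts the prefactor $t$ into the $r$-weight and gives the bound $C\langle t\rangle^{-1}E_3$, and $r\le\langle t\rangle/2$, where $\langle t-r\rangle\gtrsim\langle t\rangle$ and the two products close via \eqref{weight55} and via the pairing $\|\langle t-r\rangle\nabla^2 u\|_{L^6}\|\nabla\nabla^a u\|_{L^3}\lesssim\|\langle t-r\rangle\nabla^2 u\|_{H^1}\|\nabla\nabla^a u\|_{H^1}$. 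What the paper's route buys is that no $1/r$ singularity ever appears; the null condition is reserved for the ghost-weight argument of Section 3, where it is genuinely needed. Your route buys nothing extra here and forces exactly the $r\to 0$ bookkeeping you flag as the delicate point.

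Two steps in your sketch need repair before the argument closes. First, the assertion that bounding $|\Omega(\cdot)|/r$ by $|T(\cdot)|$ ``is where the $\langle t-r\rangle$-decay enters'' is not correct as a mechanism: at a fixed time there is no tool in the paper that converts the good derivative $T$ into $\langle t-r\rangle^{-1}$ decay (that conversion happens only through the space-time ghost-weight energy in Section 3). Here the only sources of $\langle t-r\rangle$ weights are $\mathcal{X}_3$ itself and \eqref{weight33}--\eqref{weight55}; e.g.\ the worst term $\tfrac{t}{r}|\Omega u||\nabla^2\nabla^a u|$ on $r\le\langle t\rangle/2$ closes as $t\,|Tu|\,|\nabla^2\nabla^a u|\lesssim\|\partial u\|_{L^\infty}\|\langle t-r\rangle\partial\nabla\nabla^a u\|_{L^2}\lesssim E_3^{1/2}\mathcal{X}_3$, the decay coming from the $\mathcal{X}_3$ factor, not from $T$. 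Since your concrete plan does fall back on \eqref{weight44}/\eqref{weight55}, this is a misattribution rather than a fatal error, but it should be stated correctly. Second, ``Hardy/trace-type bounds implicit in \eqref{weight22}'' do not dispose of the $1/r$: \eqref{weight22} gains only $r^{-1/2}$. What is actually needed near the origin is either \eqref{haohaohao} combined with $\|\partial u\|_{L^\infty}\le CE_3^{1/2}$ when the $1/r$ sits against an undifferentiated factor, or the $L^2$ Hardy inequality $\|r^{-1}\nabla^2 u\|_{L^2}\lesssim\|\nabla^3 u\|_{L^2}\le CE_3^{1/2}$ when it sits against a differentiated one (note Hardy cannot be put on the top-order factor carrying the $\langle t-r\rangle$ weight, since that would cost a fourth derivative). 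With these substitutions your intermediate bound does go through and the absorption step is identical to the paper's, but the paper's null-condition-free argument is shorter and avoids these issues entirely.
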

\begin{proof}
By Lemma \ref{ks}, we need to estimate the second term on the right-hand side of \eqref{non}.
For $|a|\leq 1$, we have
\begin{align}\label{keye}
\|\Box \nabla^{a}u(t)\|_{L^{2}}\leq C\sum_{b+c=a}\|N(\nabla^{b}u,\nabla^{c}u)\|_{L^2}.
\end{align}
Noting \eqref{symmert}, we only need to consider the case $b=0, c=a$. It is obvious that
\begin{align}\label{right345}
\|N(u,\nabla^{a}u)\|_{L^2}\leq C\|\nabla u\nabla^2\nabla^{a}u\|_{L^2}+C\|\nabla^2 u\nabla \nabla^{a}u\|_{L^2}.
\end{align}
\par
We first consider on the region $r\geq \langle t \rangle/{2}$. It follows from \eqref{weight11} that
\begin{align}
&\|\nabla u\nabla^2\nabla^{a}u\|_{L^2(r\geq \langle t \rangle/{2})}+\|\nabla^2 u\nabla \nabla^{a}u\|_{L^2(r\geq \langle t \rangle/{2})}\nonumber\\
&\leq C\langle t\rangle^{-1}\|r\nabla u\nabla^2\nabla^{a}u\|_{L^2(r\geq \langle t \rangle/{2})}+C\langle t\rangle^{-1}\|r\nabla^2 u\nabla \nabla^{a}u\|_{L^2(r\geq \langle t \rangle/{2})}\nonumber\\
&\leq C\langle t\rangle^{-1}\|r\nabla u\|_{L^{\infty}}\|\nabla^2\nabla^{a}u\|_{L^2}+C\langle t\rangle^{-1}\|r\nabla^2 u\|_{L^{\infty}}\|\nabla \nabla^{a}u\|_{L^2}\nonumber\\
&\leq C\langle t\rangle^{-1}E_{3}(u(t)).
\end{align}
While on the region $r\leq \langle t \rangle/{2}$, by \eqref{weight55} we have
\begin{align}
&\|\nabla u\nabla^2\nabla^{a}u\|_{L^2(r\leq \langle t \rangle/{2})}\nonumber\\
&\leq C\langle t\rangle^{-1}\|\langle t-r\rangle\nabla u\nabla^2\nabla^{a}u\|_{L^2(r\leq \langle t \rangle/{2})}\nonumber\\
&\leq C\langle t\rangle^{-1}\|\langle t-r\rangle\nabla u\|_{L^{\infty}}\|\nabla^2\nabla^{a}u\|_{L^2}\nonumber\\
&\leq C\langle t\rangle^{-1}\big(E^{1/2}_{2}(u(t))+{\mathcal {X}}_{3}(u(t))\big)E^{1/2}_{3}(u(t)),
\end{align}
and it follows from the Sobolev embedding $H^1(\mathbb{R}^3)\hookrightarrow L^6(\mathbb{R}^3)$ and $H^1(\mathbb{R}^3)\hookrightarrow L^3(\mathbb{R}^3)$ that
\begin{align}\label{finalerr}
&\|\nabla^2 u\nabla \nabla^{a}u\|_{L^2(r\leq \langle t \rangle/{2})}\nonumber\\
&\leq C\langle t\rangle^{-1}\|\langle t-r\rangle\nabla^2 u\nabla \nabla^{a}u\|_{L^2(r\leq \langle t \rangle/{2})}\nonumber\\
&\leq C\langle t\rangle^{-1}\|\langle t-r\rangle\nabla^2 u\|_{L^{6}}\|\nabla \nabla^{a}u\|_{L^3}\nonumber\\
&\leq C\langle t\rangle^{-1}\|\langle t-r\rangle\nabla^2 u\|_{{H}^{1}}\|\nabla \nabla^{a}u\|_{H^1}\nonumber\\
&\leq C\langle t\rangle^{-1}\big(E^{1/2}_{2}(u(t))+{\mathcal {X}}_{3}(u(t))\big)E^{1/2}_{3}(u(t)).
\end{align}
\par
Hence it follows from \eqref{non}, \eqref{small234}, \eqref{keye}--\eqref{finalerr} that
\begin{align}\label{high13455t}
{\mathcal {X}}_3(u(t))&\leq CE_{3}^{1/2}(u(t))+CE_{3}(u(t)) +CE^{1/2}_{3}(u(t)){\mathcal {X}}_{3}(u(t))\nonumber\\
&\leq CE_{3}^{1/2}(u(t))+C\varepsilon_{1}E_{3}^{1/2}(u(t)) +C\varepsilon_{1}{\mathcal {X}}_{3}(u(t)).
\end{align}
If $\varepsilon_1$ is sufficiently small, we can get \eqref{high1}.
\end{proof}
\par
\subsection{KSS bound}

\begin{Lemma}\label{thmKSS00}
Consider the following perturbed linear wave operator
\begin{align}\label{elasticwave345}
\Box_{h}u=\Box u+Hu,
\end{align}
where the perturbed term
\begin{align}\label{Hdefin}
(Hu)^{i}&=\partial_{l}(h^{ij}_{lm}(t,x)\partial_mu^{j}),~~i=1,2,3.
\end{align}
Assume that~$h=(h^{ij}_{lm})$ satisfies the following symmetric condition
\begin{align}\label{duichengxing678}
h^{ij}_{lm}=h^{ji}_{ml}
\end{align}
and the smallness condition
\begin{align}\label{smallness}
|h|=\sum_{i,j,l,m=1}^{3}|h^{ij}_{lm}|\ll 1.
\end{align}
 Suppose that $u\in C^{\infty}([0,T);C_{0}^{\infty}(\mathbb{R}^3;\mathbb{R}^3))$.  Then for any $0<t<T$ we have
 \begin{align}
&\big(\log(2+t)\big)^{-1}\|\left<r\right>^{-1/4}r^{-1/4}\partial u\|^2_{L^2_{t,x}(S_{t})}+\big(\log(2+t)\big)^{-1}\|\left<r\right>^{-1/4}r^{-5/4} u\|^2_{L^2_{t,x}(S_{t})}\nonumber\\
&+\|r^{-1/4}\partial u\|^2_{L^2(0,t;L^2(|x|\leq 1))}+\|r^{-5/4} u\|^2_{L^2(0,t;L^2(|x|\leq 1))}\nonumber\\
&\leq C\|\partial u(0)\|_{L^2}^2+C\big\|\big(|\nabla u|+\langle r\rangle^{-1/2}r^{-1/2}|u|\big)\Box_h u\big\|_{L^1_{t,x}(S_{t})}\nonumber\\
&+C\big\|\big(|\partial h|+\langle r\rangle^{-1/2}r^{-1/2}|h|\big)|\nabla u|\big(|\nabla u|+r^{-1}| u|\big)\big\|_{L^1_{t,x}(S_{t})},
 \end{align}
 where the strip $S_t=[0,t]\times \mathbb{R}^3$.
\end{Lemma}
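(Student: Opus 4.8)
The plan is to run the standard Morawetz / positive-commutator multiplier scheme, adapted to the perturbed operator $\Box_h$, with a Keel--Smith--Sogge type radial multiplier tuned to the weights $\langle r\rangle^{-1/4}r^{-1/4}$ and $\langle r\rangle^{-1/4}r^{-5/4}$. For $R\geq 1$ set $\Phi_R(r)=\int_0^{\min(r,R)}\langle s\rangle^{-1/2}s^{-1/2}\,ds$, so that $\Phi_R$ is nondecreasing with $\Phi_R'(r)=\langle r\rangle^{-1/2}r^{-1/2}$ on $\{r\le R\}$ and $\Phi_R'=0$ on $\{r>R\}$; since the density is $\simeq s^{-1/2}$ near $0$ and $\simeq s^{-1}$ for $s\gtrsim 1$, one has $0\le\Phi_R(r)\le C\log(2+R)$, and, $\Phi_R'$ being nonincreasing, $\Phi_R(r)/r\ge\Phi_R'(r)$. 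I will combine two multipliers: the profile $f=(\log(2+t))^{-1}\Phi_{2+t}$ — bounded uniformly on $S_t$, with $f'(r)=(\log(2+t))^{-1}\langle r\rangle^{-1/2}r^{-1/2}$ on $\{r\le 2+t\}$ — to produce the two $(\log(2+t))^{-1}$-weighted global terms, and the $t$-independent bounded profile $f=\Phi_1$ for the two local terms, without logarithmic loss. Both profiles satisfy $f\lesssim 1$ and $f(r)/r\lesssim\langle r\rangle^{-1/2}r^{-1/2}$ on $\mathbb{R}^3$ (for the first, $\log r\le\log(2+t)$ is used when $r\ge 1$). In each case the multiplier is the first-order operator $Xu=f(r)\,\partial_r u+\frac{f(r)}{r}\,u$, applied componentwise to the $\mathbb{R}^3$-valued $u$.

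First I would establish the multiplier identity. Writing $\Box u=\Box_h u-Hu$ with $Hu$ as in \eqref{Hdefin} and pairing with $Xu$ over $S_t$: the time piece gives the slice term $\big[\int_{\mathbb{R}^3}\partial_t u\cdot Xu\,dx\big]_0^t$ and the nonnegative bulk term $\tfrac12\int\!\!\int_{S_t}f'|\partial_t u|^2$ (the lower-order piece $\tfrac{f}{r}u$ of $Xu$ is chosen exactly so that only this multiple of $|\partial_t u|^2$ survives), while the spatial piece $-\int\!\!\int_{S_t}\Delta u\cdot Xu=\int\!\!\int_{S_t}\nabla u\cdot\nabla(Xu)$, after integration by parts in $x$ ($u$ compactly supported, no spatial boundary), equals
\[
\int\!\!\int_{S_t}\Big(\tfrac12 f'|\partial_r u|^2+\big(\tfrac{f}{r}-\tfrac12 f'\big)\,r^{-2}|\Omega u|^2+V_f\,|u|^2\Big),\qquad V_f:=-\tfrac{1}{2r}\,f'' .
\]
Here $\tfrac{f}{r}-\tfrac12 f'\ge\tfrac12 f'\ge 0$ by the averaging property above, and the point of the profile is that, on $\{r\le R\}$, $-\Phi_R''(r)=-\tfrac{d}{dr}\big(\langle r\rangle^{-1/2}r^{-1/2}\big)\ge\tfrac12\langle r\rangle^{-1/2}r^{-3/2}$, so $V_f\gtrsim(\mathrm{scale})\,\langle r\rangle^{-1/2}r^{-5/2}$ (plus a favorable corner contribution at $r=R$, discarded). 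Since also $f'|\partial_r u|^2+\tfrac{f}{r}r^{-2}|\Omega u|^2\gtrsim f'|\partial u|^2$, the bulk side dominates $(\mathrm{scale})\int_0^t\!\!\int_{\{r\le R\}}\!\big(\langle r\rangle^{-1/2}r^{-1/2}|\partial u|^2+\langle r\rangle^{-1/2}r^{-5/2}|u|^2\big)\,dx\,d\tau$, which is the prescribed weighted left side restricted to $\{r\le R\}$. The far region $r>2+t$ in the global estimate is recovered from conservation of energy alone: there $\langle r\rangle^{-1/2}r^{-1/2}\lesssim(2+t)^{-1}$ and $\langle r\rangle^{-1/2}r^{-5/2}\lesssim(2+t)^{-1}r^{-2}$, so integrating in $\tau$ costs only $\int_0^t(2+\tau)^{-1}\,d\tau\lesssim\log(2+t)$ against $\sup_{[0,t]}\|\partial u\|_{L^2}^2$, the factor $r^{-2}|u|^2$ being absorbed by the Hardy inequality $\int r^{-2}|u|^2\,dx\lesssim\|\nabla u\|_{L^2}^2$.

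Next I would bound the right side of the identity. The slice term at $\tau=t$ is $\lesssim\|f\|_{L^\infty}\|\partial u(t)\|_{L^2}^2\lesssim\|\partial u(t)\|_{L^2}^2$, which the companion energy estimate for $\Box_h$ — pair the equation with $\partial_t u$, use \eqref{duichengxing678}, \eqref{smallness} and Gr\"onwall — controls by $\|\partial u(0)\|_{L^2}^2+\big\|\,|\partial u|\,|\Box_h u|\,\big\|_{L^1_{t,x}(S_t)}+\big\|\,|\partial h|\,|\nabla u|^2\,\big\|_{L^1_{t,x}(S_t)}$, with no logarithm since $\|f\|_{L^\infty}\lesssim 1$ for both profiles; the slice term at $\tau=0$ is $\lesssim\|\partial u(0)\|_{L^2}^2$. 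The forcing term is $\le\int\!\!\int_{S_t}|\Box_h u|\,|Xu|$ with $|Xu|\le f|\nabla u|+\tfrac{f}{r}|u|\lesssim|\nabla u|+\langle r\rangle^{-1/2}r^{-1/2}|u|$ — the first stated error. For the perturbation, $-\int\!\!\int_{S_t}Hu\cdot Xu=\int\!\!\int_{S_t}h^{ij}_{lm}(\partial_m u^j)\,\partial_l(Xu)^i$; its top-order part $\int\!\!\int f\,h^{ij}_{lm}(\partial_m u^j)\,\partial_r\partial_l u^i$ is symmetrized via \eqref{duichengxing678} into $\tfrac12\int\!\!\int f\,\partial_r\big(h^{ij}_{lm}\partial_l u^i\partial_m u^j\big)-\tfrac12\int\!\!\int f\,(\partial_r h^{ij}_{lm})\,\partial_l u^i\partial_m u^j$, and one further integration by parts turns the first summand into $|h|$ times a constant multiple of the bulk integrand, absorbed into the left side by \eqref{smallness}; the second summand, together with all remaining commutator terms from $\partial_l(Xu)^i$, is pointwise $\lesssim\big(|\partial h|+\langle r\rangle^{-1/2}r^{-1/2}|h|\big)\,|\nabla u|\,\big(|\nabla u|+r^{-1}|u|\big)$ (using $f\lesssim 1$, $f'+\tfrac{f}{r}\lesssim\langle r\rangle^{-1/2}r^{-1/2}$ and $|\partial(\tfrac{f}{r})|\lesssim r^{-1}\langle r\rangle^{-1/2}r^{-1/2}$, and Hardy near the origin) — the second stated error. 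Summing the two identities and adding the far-region bound yields the claim.

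The step I expect to be the main obstacle is the precise accounting of the two nonnegative bulk quadratic forms so that a \emph{single} profile majorizes \emph{both} prescribed weighted norms — in particular producing the sharp negative power $r^{-5/2}$, rather than merely $r^{-2}$, in the potential $V_f$, which is exactly what forces the density $\langle s\rangle^{-1/2}s^{-1/2}$ and hence the unavoidable $\log(2+t)$ loss on the weighted norms — together with verifying that, after the symmetrization and the extra integration by parts, the perturbation contribution collapses to exactly $\big(|\partial h|+\langle r\rangle^{-1/2}r^{-1/2}|h|\big)|\nabla u|\big(|\nabla u|+r^{-1}|u|\big)$ plus a remainder absorbable via \eqref{smallness}.
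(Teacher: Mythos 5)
Your proposal is correct, and it follows the same Keel--Smith--Sogge multiplier framework as the paper (the multiplier $Xu=f(r)\partial_r u+\frac{f(r)}{r}u$ is exactly the paper's $M$, the perturbation $Hu$ is handled by symmetrizing the quadratic form via \eqref{duichengxing678}, the slice terms are controlled by the standard perturbed energy inequality under \eqref{smallness}, and the far region is recovered from energy plus Hardy), but the way you produce the logarithmic factor is genuinely different. The paper imports two specific profiles from the literature: $f(r)=(r/(1+r))^{1/2}$ for the local estimate near $|x|\le 1$, and the one-parameter family $f(r)=r/(r+\rho)$, $\rho\ge 1$, whose constants are uniform in $\rho$, followed by a dyadic summation in $\rho$ over $[1,t]$ to obtain the $\bigl(\log(2+t)\bigr)^{-1}$-weighted bound on the intermediate region $1<|x|<t$ (with $|x|\ge t$ handled by energy and Hardy). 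You instead build the logarithm directly into a single $t$-dependent profile $f=(\log(2+t))^{-1}\Phi_{2+t}$ whose derivative is exactly the KSS weight $\langle r\rangle^{-1/2}r^{-1/2}$, so the whole strip up to $r\le 2+t$ comes out of one multiplier identity, the favorable sign of $-f''$ giving the sharp $\langle r\rangle^{-1/2}r^{-5/2}$ potential and the corner at $r=2+t$ being a discarded positive boundary term; your $\Phi_1$ profile plays the role of the paper's $(r/(1+r))^{1/2}$. Your route avoids the dyadic summation and keeps the whole argument self-contained; the paper's route lets it reuse the identities of Hidano--Wang--Yokoyama and Metcalfe--Sogge/Sterbenz/Zha essentially verbatim, and its family $r/(r+\rho)$ has the convenience that $f/r\lesssim f'$ uniformly, which your log-profile does not enjoy.

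That last point is the one place where your write-up needs a small repair, though not a new idea: after symmetrizing the top-order part of $\int\!\!\int h^{ij}_{lm}(\partial_m u^j)\,f\,\partial_r\partial_l u^i$ and integrating by parts, the resulting term carries the coefficient $f'+\frac{2f}{r}$, and for your global profile $\frac{f}{r}$ can exceed $f'$ by a factor of order $\log r$, so this contribution is \emph{not} a constant multiple of the bulk integrand and cannot be absorbed into the left side by \eqref{smallness} alone. It is, however, pointwise bounded by $C|h|\langle r\rangle^{-1/2}r^{-1/2}|\nabla u|^2$ thanks to your own observation that $f'+\frac{f}{r}\lesssim\langle r\rangle^{-1/2}r^{-1/2}$, so it should simply be routed to the second error term in the statement of the lemma rather than absorbed; with that rerouting (and only the $f'$ piece absorbed), your argument closes and yields exactly the claimed estimate.
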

\begin{proof}
See \ref{appen}.
\end{proof}
\begin{proposition}\label{prop2233kss}
 Let $u$ be a radially symmetric and smooth solution to equation \eqref{Cauchynew}.  Assume that
\begin{align}\label{small23456666}
\varepsilon_1\equiv \sup_{0\leq t<T}E^{1/2}_{3}(u(t))
\end{align}
is sufficiently small. Then for $0\leq t< T$, we have
   \begin{align}\label{high1666666}
     {\mathcal {M}}^{1/2}_3(u(t))\leq C\big(\log(2+t)\big)^{1/2}(1+t)^{\delta}\sup_{0\leq \tau\leq t}E_{3}^{1/2}(u(\tau)),
    \end{align}
with $\delta=\varepsilon_1/2$.
\end{proposition}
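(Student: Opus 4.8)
The plan is to apply the perturbed--wave KSS estimate of Lemma \ref{thmKSS00} to each commuted function $Z^a u$ with $|a|\le 2$, $a_4\le 1$, sum over $a$, and close a Gronwall inequality for $\mathcal M_3(u(t))$. First I set up the perturbation. By Lemma \ref{Nulll}, $\Box Z^a u=\sum_{b+c+d=a}N_d(Z^b u,Z^c u)$, where each $N_d$ is a null form of the shape \eqref{cutifof99ggneww334}. The only summands in which $Z^a u$ appears undifferentiated by a vector field are those with $d=0$ and $\{|b|,|c|\}=\{0,|a|\}$, and by \eqref{symmert} each of these equals $N(u,Z^a u)=\partial_l(g^{ijk}_{lmn}\partial_m u^j\,\partial_n Z^a u^k)$. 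I move them into a perturbed operator by setting $h^{ik}_{ln}:=-c_0\,g^{ijk}_{lmn}\partial_m u^j$, so that with $H$ as in \eqref{Hdefin} one has $\Box_h Z^a u=\Box Z^a u+HZ^a u=R_a$, where $R_a$ runs only over $b+c+d=a$ with $|b|,|c|\le|a|-1\le1$. The symmetry \eqref{duichengxing678} of $h$ is exactly \eqref{sym1new}, and $|h|\le C|\nabla u|\le C\|\nabla u\|_{H^2}\le CE_3^{1/2}(u(t))\le C\varepsilon_1\ll1$ by $H^2(\mathbb R^3)\hookrightarrow L^\infty$ and \eqref{kord}, so \eqref{smallness} holds.

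Next, applying Lemma \ref{thmKSS00} to $v=Z^a u$ and summing over all admissible $a$, the left side collapses, by \eqref{morasdd}--\eqref{loacljk}, to exactly $\big(\log(2+t)\big)^{-1}\mathcal M_3(u(t))+\mathcal L_3(u(t))$. On the right I obtain three groups of terms: the data term $C\sum_a\|\partial Z^a u(0)\|_{L^2}^2\le CE_3(u(0))\le C\sup_{0\le\tau\le t}E_3(u(\tau))$; the nonlinear term $C\sum_a\big\|(|\nabla Z^a u|+\langle r\rangle^{-1/2}r^{-1/2}|Z^a u|)\,R_a\big\|_{L^1_{t,x}(S_t)}$; and the perturbation term $C\sum_a\big\|(|\partial h|+\langle r\rangle^{-1/2}r^{-1/2}|h|)\,|\nabla Z^a u|\,(|\nabla Z^a u|+r^{-1}|Z^a u|)\big\|_{L^1_{t,x}(S_t)}$, with $|h|\le C|\nabla u|$ and $|\partial h|\le C|\nabla\partial u|$.

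To estimate the last two groups I split space--time into $r\le1$, $1\le r\le\langle\tau\rangle/2$, and $r\ge\langle\tau\rangle/2$. On $r\le1$ there is no time decay, but $|\partial h|+\langle r\rangle^{-1/2}r^{-1/2}|h|\le C\varepsilon_1$ and the contribution is absorbed into $C\varepsilon_1\mathcal L_3(u(t))$ on the left, using $r^{-1/4}\ge1$ there and Hardy's inequality for the $r^{-1}|Z^a u|$ factor. On $r\ge\langle\tau\rangle/2$ the weighted Sobolev bounds \eqref{weight11}, \eqref{weight22} give $|\partial h|+\langle r\rangle^{-1/2}r^{-1/2}|h|+\cdots\le C\varepsilon_1\langle\tau\rangle^{-1}$; on $1\le r\le\langle\tau\rangle/2$ one uses instead the Klainerman--Sideris bounds \eqref{weight33}--\eqref{weight55} together with Proposition \ref{prop2233} (so $|\nabla u|\le C\langle t-r\rangle^{-1}\varepsilon_1\le C\langle\tau\rangle^{-1}\varepsilon_1$) and, for the summands of $R_a$ carrying two derivatives on a single factor, the null structure \eqref{xuyao1}--\eqref{xuyao2} combined with the radial identities $\widetilde\Omega u=0$, $\Omega=\widetilde\Omega-U$, \eqref{compoi1}, \eqref{haohaohao} to expose an $\Omega/r$ gain. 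After Cauchy--Schwarz against the weights defining $\mathcal M_3$ and $\mathcal L_3$, these two groups are bounded by $C\varepsilon_1\log(2+t)\sup_{0\le\tau\le t}E_3(u(\tau))+C\varepsilon_1\int_0^t\langle\tau\rangle^{-1}\mathcal M_3(u(\tau))\,d\tau+\tfrac12\mathcal L_3(u(t))$. Absorbing $\tfrac12\mathcal L_3(u(t))$ on the left, multiplying through by $\log(2+t)$, and applying Gronwall's inequality with kernel $C\varepsilon_1\langle\tau\rangle^{-1}$ gives $\mathcal M_3(u(t))\le C\log(2+t)(1+t)^{C\varepsilon_1}\sup_{0\le\tau\le t}E_3(u(\tau))$; taking square roots yields \eqref{high1666666} with $\delta=\varepsilon_1/2$.

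\textbf{Main obstacle.} The crux is the very low regularity: $E_3$ controls only up to three vector fields with at most one scaling field, so in $R_a$ and in the $h$--term one can never afford to place a factor such as $\nabla S u$ or $\nabla^2 S u$ in $L^\infty$, and $\mathcal X_3$ supplies $\langle t-r\rangle$--weighted $L^\infty$ decay only for purely spatial derivatives. Handling these borderline factors forces one to extract the $\Omega/r$ gain from the null structure, pay for it with weighted $L^2$ norms that are themselves only controlled by $\mathcal M_3$, and feed the result back through Gronwall — which is precisely what produces the $(1+t)^\delta$ loss. A secondary technical point is verifying that, after the commutation in Lemma \ref{Nulll} and the removal of the top--order part into $h$, every remaining factor in $R_a$ carries at most one scaling field, so that all weighted $L^2$ norms encountered are legitimately dominated by $E_3$, $\mathcal X_3$ (hence $E_3$ via Proposition \ref{prop2233}), or $\mathcal M_3$.
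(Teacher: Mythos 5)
Your setup is the same as the paper's: commute with $Z^a$, put the borderline quadratic part $N(u,Z^a u)$ into the perturbation $h^{ik}_{ln}\sim g^{ijk}_{lmn}\partial_m u^j$ (symmetry \eqref{duichengxing678} being \eqref{sym1new}, smallness from $H^2\hookrightarrow L^\infty$), apply Lemma \ref{thmKSS00}, split into $r\lesssim 1$, the intermediate region and $r\gtrsim\langle \tau\rangle$, and absorb the $r\le 1$ contribution as $C\varepsilon_1\mathcal L_3$. Two remarks on the middle part: the null structure \eqref{xuyao1}--\eqref{xuyao2} is not needed anywhere in this proposition (it is reserved for the ghost-weight estimate in Section 3); the paper instead handles the terms $\nabla Z^b u\,\nabla^2 Z^c u$ with $|b|,|c|\le 1$ by the case distinction $c_4=0$ (put $\langle t-r\rangle\nabla^2 Z^c u$ in $L^2$ via $\mathcal X_3$ and $r\nabla Z^b u$ in $L^\infty$ via \eqref{weight11}) versus $b_4=0$ (use \eqref{weight44} on $\nabla Z^b u$ and plain $L^2$ on $\nabla^2 Z^c u$), together with $\langle t\rangle\le Cr\langle t-r\rangle$ on $r\ge 1/2$, Hardy, and Proposition \ref{prop2233}.

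The genuine gap is in your closure. The paper arranges matters so that \emph{no} KSS quantity appears on the right except the absorbable $\varepsilon_1\mathcal L_3$: every contribution from $r\ge 1/2$ is bounded pointwise in time by $C\langle\tau\rangle^{-1}E_3^{3/2}(u(\tau))$, whose time integral is $\le C\varepsilon_1\log(1+t)\sup_\tau E_3$, and then
\begin{align*}
\bigl(\log(2+t)\bigr)^{-1}\mathcal M_3(u(t))\le CE_3(u(0))+C\varepsilon_1\log(1+t)\sup_{0\le\tau\le t}E_3(u(\tau))\le C(1+t)^{\varepsilon_1}\sup_{0\le\tau\le t}E_3(u(\tau))
\end{align*}
follows directly, with no Gronwall. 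Your route instead keeps a term $C\varepsilon_1\int_0^t\langle\tau\rangle^{-1}\mathcal M_3(u(\tau))\,d\tau$ and applies Gronwall; but the left-hand side of the KSS inequality only controls $\bigl(\log(2+t)\bigr)^{-1}\mathcal M_3$, so after multiplying by $\log(2+t)$ the effective kernel is $C\varepsilon_1\langle\tau\rangle^{-1}\log(2+t)$, and Gronwall yields $\mathcal M_3(u(t))\le C\bigl(\log(2+t)\bigr)^2\exp\bigl(C\varepsilon_1\log(2+t)\log(1+t)\bigr)\sup_\tau E_3$, i.e.\ a factor $(1+t)^{C\varepsilon_1\log(2+t)}$ rather than $(1+t)^{C\varepsilon_1}$. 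This super-polynomial loss does not give \eqref{high1666666} with $\delta=\varepsilon_1/2$, and it would also destroy the dyadic summation \eqref{123dwwwd}, \eqref{123dwwwdfffdd} in the proof of Theorem \ref{mainthm33}. (If what you really produce is $\varepsilon_1\int_0^t\langle\tau\rangle^{-1}\mathcal N_3\,d\tau$, note this is $\le\varepsilon_1\mathcal M_3(t)$, which likewise cannot be absorbed against $\bigl(\log(2+t)\bigr)^{-1}\mathcal M_3(t)$ for large $t$.) The fix is to estimate the intermediate and far regions as the paper does, so that no $\mathcal M_3$ or $\mathcal N_3$ term survives on the right at all.
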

\begin{proof}
By Lemma \ref{Nulll} and Lemma \ref{thmKSS00}, we have that
\begin{align}\label{ghjutn}
&\big(\log(2+t)\big)^{-1}{\mathcal {M}}_3(u(t))+{\mathcal {L}}_3(u(t))\nonumber\\
&\leq CE_{3}(u(0))+C\sum_{\substack{|a|\leq 2\\a_4\leq 1}}\big\|\big(|\partial\nabla u|+\langle r\rangle^{-1/2}r^{-1/2}|\nabla u|\big)|\nabla Z^au|\big(|\nabla Z^au|+r^{-1}| Z^au|\big)\big\|_{L^1_{t,x}(S_{t})}\nonumber\\
&+C\sum_{\substack{|a|\leq 2\\a_4\leq 1}}\sum_{\substack{b+c+d=a\\b,c\neq a}}\big\|\big(|\nabla Z^au|+\langle r\rangle^{-1/2}r^{-1/2}|Z^au|\big)N_d(Z^bu,Z^cu)\big\|_{L^1_{t,x}(S_{t})}.
\end{align}
\par
We first estimate all the terms on the right-hand side of \eqref{ghjutn} on the region $r\geq 1/2$. Note that when $r\geq 1/2$, $\langle t\rangle\leq Cr\langle t-r\rangle$. For any $|a|\leq 2, a_4\leq 1$, it follows from \eqref{weight33}, \eqref{weight55}, the Hardy inequality and Proposition \ref{prop2233} that
\begin{align}
&\big\|\big(|\partial\nabla u|+\langle r\rangle^{-1/2}r^{-1/2}|\nabla u|\big)|\nabla Z^au|\big(|\nabla Z^au|+r^{-1}| Z^au|\big)\big\|_{L^1_{x}(r\geq 1/2)}\nonumber\\
&\leq \big\|\big(|\partial\nabla u|+r^{-1}|\nabla u|\big)|\nabla Z^au|\big(|\nabla Z^au|+r^{-1}|\nabla Z^au|\big)\big\|_{L^1_{x}(r\geq 1/2)}\nonumber\\
&\leq C\langle t\rangle^{-1}\big\|\big(r\langle t-r\rangle|\partial\nabla u|+\langle t-r\rangle{|\nabla u|}\big)|\nabla Z^au|\big(|\nabla Z^au|+r^{-1}|\nabla Z^au|\big)\big\|_{L^1_{x}(r\geq 1/2)}\nonumber\\
&\leq C\langle t\rangle^{-1}\big(\|r\langle t-r\rangle\partial\nabla u\|_{L^{\infty}}+\|\langle t-r\rangle\nabla u\|_{L^{\infty}}\big)\|\nabla Z^au\|_{L^2}(\|\nabla Z^au\|_{L^2}+\|r^{-1} Z^au\|_{L^2})\nonumber\\
&\leq C\langle t\rangle^{-1}\big({\mathcal {X}}_3(u(t))+E^{1/2}_{2}(u(t))\big)E_{3}(u(t))\nonumber\\
&\leq C\langle t\rangle^{-1}E^{3/2}_{3}(u(t)).
\end{align}
For any $|a|\leq 2, a_4\leq 1, b+c+d=a, b,c\neq a$, if $c_4=0$,
 by \eqref{weight11}, the Hardy inequality and Proposition \ref{prop2233}, we have that it holds that
\begin{align}
&\big\|\big(|\nabla Z^au|+\langle r\rangle^{-1/2}r^{-1/2}|Z^au|\big)N_d(Z^bu,Z^cu)\big\|_{L_{x}^1(r\geq 1)}\nonumber\\
&\leq C\langle t\rangle^{-1}\big\|\big(|\nabla Z^au|+r^{-1}{|Z^au|}\big)r\nabla Z^bu \langle t-r\rangle\nabla^2Z^cu\big\|_{L_{x}^1(r\geq 1)}\nonumber\\
&\leq C\langle t\rangle^{-1}\big(\|\nabla Z^au\|_{L^2}+\|r^{-1}{Z^au}\|_{L^2}\big)\|r\nabla Z^bu\|_{L^{\infty}} \|\langle t-r\rangle\nabla^2Z^cu\|_{L^{2}}\nonumber\\
&\leq C\langle t\rangle^{-1}{\mathcal {X}}_3(u(t))E_{3}(u(t))\nonumber\\
&\leq C\langle t\rangle^{-1}E^{3/2}_{3}(u(t)).
\end{align}
If $b_4=0$, we further need to split the region $r\geq 1/2$ into $1/2\leq r\leq \langle t\rangle/2$ and $r\geq \langle t\rangle/2$. On the region
$1/2\leq r\leq \langle t\rangle/2$,
it follows from \eqref{weight44}, the Hardy inequality and Proposition \ref{prop2233} that
\begin{align}
&\big\|\big(|\nabla Z^au|+\langle r\rangle^{-1/2}r^{-1/2}|Z^au|\big)N_d(Z^bu,Z^cu)\big\|_{L_{x}^1(1/2\leq r\leq \langle t\rangle/2)}\nonumber\\
&\leq C\langle t\rangle^{-1}\big\|\big(|\nabla Z^au|+r^{-1}{|Z^au|}\big)r^{1/2}\langle t-r\rangle\nabla Z^bu \nabla^2Z^cu\big\|_{L_{x}^1(1/2\leq r\leq \langle t\rangle/2)}\nonumber\\
&\leq C\langle t\rangle^{-1}\big(\|\nabla Z^au\|_{L^2}+\|r^{-1}{Z^au}\|_{L^2}\big)\|r^{1/2}\langle t-r\rangle\nabla Z^bu\|_{L^{\infty}} \|\nabla^2Z^cu\|_{L^{2}}\nonumber\\
&\leq C\langle t\rangle^{-1}\big(E^{1/2}_{2}(u(t))+{\mathcal {X}}_3(u(t))\big)E_{3}(u(t))\nonumber\\
&\leq C\langle t\rangle^{-1}E^{3/2}_{3}(u(t)).
\end{align}
And on the region
$ r\geq \langle t\rangle/2$,
it follows from \eqref{weight11}, the Hardy inequality and Proposition \ref{prop2233} that
\begin{align}
&\big\|\big(|\nabla Z^au|+\langle r\rangle^{-1/2}r^{-1/2}|Z^au|\big)N_d(Z^bu,Z^cu)\big\|_{L_{x}^1(r\geq \langle t\rangle/2)}\nonumber\\
&\leq C\langle t\rangle^{-1}\big\|\big(|\nabla Z^au|+r^{-1}{|Z^au|}\big)r\nabla Z^bu \nabla^2Z^cu\big\|_{L_{x}^1(r\geq \langle t\rangle/2)}\nonumber\\
&\leq C\langle t\rangle^{-1}\big(\|\nabla Z^au\|_{L^2}+\|r^{-1}{Z^au}\|_{L^2}\big)\|r\nabla Z^bu\|_{L^{\infty}} \|\nabla^2Z^cu\|_{L^{2}}\nonumber\\
&\leq C\langle t\rangle^{-1}E^{3/2}_{3}(u(t)).
\end{align}
\par
Now we will estimate the terms on the right-hand side of \eqref{ghjutn} on the region $r\leq 1/2$. \eqref{weight22} gives that for any $|a|\leq 2, a_4\leq 1$,
\begin{align}
&\big\|\big(|\partial\nabla u|+\langle r\rangle^{-1/2}r^{-1/2}|\nabla u|\big)|\nabla Z^au|\big(|\nabla Z^au|+r^{-1}| Z^au|\big)\big\|_{L^1_{t,x}(r\leq 1/2)}\nonumber\\
&\leq \big\|\big(|\partial\nabla u|+r^{-1/2}|\nabla u|\big)|\nabla Z^au|\big(|\nabla Z^au|+r^{-1}| Z^au|\big)\big\|_{L^1_{t,x}(r\leq 1/2)}\nonumber\\
&\leq C\big\|\big(r^{1/2}|\partial \nabla u|+|\nabla u|\big)r^{-1/4}|\nabla Z^{a}u|\big(r^{-1/4}|\nabla Z^{a}u|+r^{-5/4}| Z^{a}u|\big)\big \|_{L^1_{t,x}(r\leq 1/2)}\nonumber\\
&\leq C\big(\|r^{1/2}\partial \nabla u\|_{L_{t,x}^{\infty}(S_t)}+\|\nabla u\|_{L_{t,x}^{\infty}(S_t)}\big)\|r^{-1/4}\nabla Z^{a}u\|_{L^2_{t,x}(r\leq 1)}\nonumber\\
&~~\cdot
\big(\|r^{-1/4}\nabla Z^{a}u\|_{L^2_{t,x}(r\leq 1)}+\|r^{-5/4} Z^{a}u\|_{L^2_{t,x}(r\leq 1)}\big)\nonumber\\
&\leq C\sup_{0\leq \tau\leq t}E_{3}^{1/2}(u(\tau))\mathcal {L}_3(u(t)).
\end{align}
Similarly, via \eqref{weight22} we also have that for any $|a|\leq 2, a_4\leq 1, b+c+d=a,b,c\neq a$,
\begin{align}
&\big\|\big(|\nabla Z^au|+\langle r\rangle^{-1/2}r^{-1/2}|Z^au|\big)N_d(Z^bu,Z^cu)\big\|_{L_{t,x}^1(r\leq 1)}\nonumber\\
&\leq C\big\|\big( r^{-1/4}|\nabla Z^au|+r^{-5/4}{|Z^au|}\big) r^{1/2}\nabla Z^bu~ r^{-1/4}\nabla^2Z^cu\big\|_{L_{t,x}^1(r\leq 1)}\nonumber\\
&\leq  C\big( \|r^{-1/4}\nabla Z^au|_{L^2_{t,x}(r\leq 1)}+\|r^{-5/4}{Z^au}\|_{L^2_{t,x}(r\leq 1)}\big) \|r^{1/2}\nabla Z^bu\|_{L^{\infty}_{t,x}(S_t)}\| r^{-1/4}\nabla^2Z^cu\big\|_{L^2_{t,x}(r\leq 1)}\nonumber\\
&\leq C\sup_{0\leq \tau\leq t}E_{3}^{1/2}(u(\tau))\mathcal {L}_3(u(t)).
\end{align}
\par
The above argument gives
\begin{align}
&\big(\log(2+t)\big)^{-1}{\mathcal {M}}_3(u(t))+{\mathcal {L}}_3(u(t))\nonumber\\
&\leq CE_{3}(u(0))+C\int_0^{t}(1+\tau)^{-1}E^{3/2}_{3}(u(\tau))d\tau+C\sup_{0\leq \tau\leq t}E_{3}^{1/2}(u(\tau)){\mathcal {L}}_3(u(t))
\nonumber\\
&\leq CE_{3}(u(0))+C\log(1+t)\sup_{0\leq \tau\leq t}E^{3/2}_{3}(u(\tau))+C\sup_{0\leq \tau\leq t}E_{3}^{1/2}(u(\tau)){\mathcal {L}}_3(u(t))\nonumber\\
&\leq CE_{3}(u(0))+C\varepsilon_1\log(1+t)\sup_{0\leq \tau\leq t}E_{3}(u(\tau))+C\varepsilon_1{\mathcal {L}}_3(u(t)).
\end{align}
If $\varepsilon_1$ is sufficiently small, we have
\begin{align}
&\big(\log(2+t)\big)^{-1}{\mathcal {M}}_3(u(t))+{\mathcal {L}}_3(u(t))\nonumber\\
&\leq CE_{3}(u(0))+C\varepsilon_1\log(1+t)\sup_{0\leq \tau\leq t}E_{3}(u(\tau))\nonumber\\
&\leq CE_{3}(u(0))+C\log(1+t)^{\varepsilon_1}\sup_{0\leq \tau\leq t}E_{3}(u(\tau))\nonumber\\
&\leq CE_{3}(u(0))+C(1+t)^{\varepsilon_1}\sup_{0\leq \tau\leq t}E_{3}(u(\tau))\nonumber\\
&\leq C(1+t)^{\varepsilon_1}\sup_{0\leq \tau\leq t}E_{3}(u(\tau)),
\end{align}
which completes the proof of Proposition \ref{prop2233kss}.
\end{proof}
\section{Proof of Theorem \ref{mainthm33}}
In order to prove Theorem \ref{mainthm33}, the key point is the following a priori estimate.
 \begin{proposition}\label{mainthm33333}
There exist positive constants $\varepsilon_0$ and $A$ such that for any given $\varepsilon$ with $0<\varepsilon\leq \varepsilon_{0}$, if the initial data is radially symmetric and satisfies
\eqref{label345newe}
and $u$ is a smooth and radially symmetric solution
to the Cauchy problem \eqref{EQwave}--\eqref{cauchydata1} with the null condition \eqref{null1111234}, then for any $T>0$,
\begin{align}
\sup_{0\leq t<T}E^{1/2}_{3}(u(t))\leq  A \varepsilon.
\end{align}
\end{proposition}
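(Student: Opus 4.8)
The plan is to run a standard continuity/bootstrap argument on the energy $E_3(u(t))$, combining the ghost weight energy estimate of Alinhac to harvest the null structure near the light cone with the Klainerman--Sideris bound (Proposition \ref{prop2233}) and the KSS bound (Proposition \ref{prop2233kss}) to handle the interior region. Concretely, I would assume the bootstrap hypothesis $\sup_{0\le t<T}E_3^{1/2}(u(t))\le A\varepsilon$ with $A\varepsilon$ small enough that Propositions \ref{prop2233} and \ref{prop2233kss} apply, so that $\mathcal{X}_3(u(t))\le CE_3^{1/2}(u(t))$ and $\mathcal{M}_3^{1/2}(u(t))\le C(\log(2+t))^{1/2}(1+t)^\delta\sup_{\tau\le t}E_3^{1/2}(u(\tau))$ with $\delta=\varepsilon_1/2$ small. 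The goal is then to close with $A\varepsilon$ replaced by $\tfrac12 A\varepsilon$ (or at least by something strictly better than $A\varepsilon$), which forces $T$ to be extendable to $+\infty$, combined with the local existence theory implicit in the space $X^3_{\rm rad}(T)$.

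The core estimate is the differential energy inequality for $\mathcal{E}_3+E_3$. Applying the ghost-weight energy identity to $\Box Z^a u = \sum_{b+c+d=a} N_d(Z^bu,Z^cu)$ from Lemma \ref{Nulll} (with $|a|\le 2$, $a_4\le 1$), one gets
\begin{align}\label{plan-energy}
\frac{d}{dt}\big(E_3(u(t))+\mathcal{E}_3(u(t))\big)+\sum_{\substack{|a|\le 2\\a_4\le1}}\mathcal{E}_1'\text{-type terms}\le C\sum_{\substack{|a|\le 2\\a_4\le1}}\big|\langle \partial_t Z^au, \text{RHS}\rangle\big|.
\end{align}
The right side splits into a genuinely quadratic-in-$u$ piece (the $b+c=a$, $d=0$ terms, where $N_d=N$) plus lower-order-factor pieces. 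For the quadratic piece one uses the null-form pointwise bounds \eqref{xuyao1}--\eqref{xuyao2}: on $r\ge \langle t\rangle/2$ the extra $1/r\sim\langle t\rangle^{-1}$ and the ghost weight $\langle t-r\rangle^{-2}$ give integrable-in-time decay after using \eqref{weight11}, \eqref{weight22} and \eqref{haohaohao} to trade $\Omega/r$ for $T$; on $1/2\le r\le\langle t\rangle/2$ one uses \eqref{weight44}, \eqref{weight55} and the Klainerman--Sideris gain $\langle t-r\rangle\sim\langle t\rangle$ together with $\mathcal{X}_3\le CE_3^{1/2}$; and on $r\le 1/2$ one uses the KSS weights $r^{-1/4},r^{-5/4}$ and the weighted Sobolev bounds \eqref{weight22}, \eqref{charu00}, yielding a factor of $\mathcal{L}_3(u(t))$ or $\mathcal{M}_3(u(t))$ which is absorbed via Proposition \ref{prop2233kss}. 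Integrating \eqref{plan-energy} in time and using $\int_0^\infty C\varepsilon_1\langle\tau\rangle^{-1}\,d\tau$ being \emph{not} finite is precisely why one keeps the $\mathcal{M}_3$ machinery: the interior contributions are controlled by $\varepsilon_1\cdot(\log)(1+t)^\delta \sup E_3$, which after feeding back is absorbed because the final product is $\varepsilon_1\cdot A\varepsilon\cdot(\text{slowly growing})$ and one shows the slow growth is actually beaten by choosing $\varepsilon_0$ small. The cleanest way is: $E_3(u(t))\le CE_3(u(0))+C\varepsilon_1\big(E_3(u(t))+\mathcal{M}_3(u(t))+\int_0^t\langle\tau\rangle^{-1}E_3(u(\tau))\,d\tau\big)$, then Gr\"onwall plus Proposition \ref{prop2233kss} and smallness of $\varepsilon_1$ give $E_3(u(t))\le CE_3(u(0))\le C\varepsilon^2$; choosing $A^2>C$ and $\varepsilon_0$ small closes the bootstrap.

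The initial energy $E_3(u(0))$ must be bounded by the data norm in \eqref{label345newe}; since $S=t\partial_t+r\partial_r$ and at $t=0$ we have $Su|_{t=0}=r\partial_r u_0=x\cdot\nabla u_0$ and $\partial_t(Su)|_{t=0}$ involves $u_1$, $x\cdot\nabla u_1$, and $\Delta u_0$ via the equation, one checks $E_3(u(0))\le C(\sum_{|a|\le2}\|\langle x\rangle\nabla^a\nabla u_0\|_{L^2}+\sum_{|a|\le2}\|\langle x\rangle\nabla^a u_1\|_{L^2})^2\le C\varepsilon^2$, which is exactly why the low weight $\langle x\rangle$ (one application of $S$) suffices. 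The main obstacle I anticipate is the interior/origin region $r\le 1/2$: the vector fields $\nabla,S$ do not directly control angular regularity there, and the Klainerman--Sideris weight $\langle t-r\rangle$ degenerates on the cone, so one genuinely needs the refined KSS estimate of Lemma \ref{thmKSS00} together with the radial weighted Sobolev inequality \eqref{charu00}; ensuring that the resulting $\mathcal{L}_3$ and $\mathcal{M}_3$ terms are \emph{absorbable} (rather than merely bounded) is the delicate bookkeeping point, and it is precisely here that radial symmetry — which collapses $\nabla\wedge u=0$ and reduces \eqref{EQwave} to the single-speed system \eqref{Cauchynew} — is indispensable. Once the bootstrap inequality $\sup_{0\le t<T}E_3^{1/2}(u(t))\le \tfrac12 A\varepsilon$ is established for all $T$, the usual continuation argument upgrades the local $X^3_{\rm rad}$ solution to a global one, proving the proposition.
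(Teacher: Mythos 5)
Your overall architecture (bootstrap on $E_3$, ghost weight to exploit the null structure for $r\geq\langle t\rangle/2$, Klainerman--Sideris and KSS bounds with the region splitting $r\geq\langle t\rangle/2$, $1/2\leq r\leq\langle t\rangle/2$, $r\leq 1/2$, and the weight $\langle x\rangle$ coming from a single application of $S$) is the same as the paper's, but two steps in your plan would fail as written. First, the quasilinear top-order terms: you treat the $b+c=a$, $d=0$ contributions as if the pointwise null-form bounds \eqref{xuyao1}--\eqref{xuyao2} plus weighted Sobolev inequalities suffice, but for $|a|=2$ the term $N(u,Z^{a}u)$ contains $\nabla^{2}Z^{a}u$, i.e.\ four derivatives of $u$, which is \emph{not} controlled by $E_3$ under the bootstrap. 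This is precisely the point the paper flags before \eqref{heng1}: one must use the symmetry \eqref{sym1new} and integrate by parts so that the dangerous term becomes a time derivative of the cubic correction built into the perturbed energy $\widetilde{E}_3$ of \eqref{pEnrt}, leaving only the trilinear remainders in \eqref{high} (where the extra derivative sits on the low-order factor or comes with a factor $q'(\sigma)$). Your proposal has no modified energy and no symmetrization, so your core inequality \eqref{plan-energy} cannot be closed at $H^3$ regularity.

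Second, your closing-in-time step does not give a bound uniform in $T$. From an inequality of the form $E_3(t)\leq CE_3(0)+C\varepsilon_1\bigl(\mathcal{M}_3(t)+\int_0^t\langle\tau\rangle^{-1}E_3(\tau)\,d\tau\bigr)$, Proposition \ref{prop2233kss} gives $\mathcal{M}_3(t)\leq C\log(2+t)(1+t)^{2\delta}\sup_{\tau\leq t}E_3(\tau)$, whose prefactor is not small for large $t$ no matter how small $\varepsilon_0$ is, so it cannot be ``absorbed''; and Gr\"onwall on the $\int\langle\tau\rangle^{-1}E_3$ term yields growth like $(1+t)^{C\varepsilon_1}$ --- i.e.\ only an almost-global statement. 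The paper's mechanism is different: the interior contributions are kept as $\langle t\rangle^{-1}E_3\mathcal{N}_3^{1/2}$ and $\langle t\rangle^{-1}E_3^{1/2}\mathcal{N}_3$ (with $\mathcal{N}_3$ the density of $\mathcal{M}_3$), the term $\langle t\rangle^{-1}E_3\mathcal{E}_3^{1/2}$ is absorbed by Cauchy--Schwarz into the ghost-weight dissipation $\mathcal{E}_3$ on the left of \eqref{high}, and the time integrals in \eqref{sssdddd222} are handled by H\"older on dyadic blocks $[2^{k},2^{k+1}]$, where
\begin{align*}
2^{-k}\int_{2^{k}}^{2^{k+1}}\mathcal{N}_3^{1/2}(u(t))\,dt\leq 2^{-k/2}\mathcal{M}_3^{1/2}(u(2^{k+1}))\leq C\,2^{-k/2+\delta k}\bigl(\log(2+2^{k+1})\bigr)^{1/2}\sup_{0\leq t<T}E_3^{1/2}(u(t)),
\end{align*}
and the dyadic sum converges since $\delta<1/2$. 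This yields the uniform inequality $\sup E_3\leq C_1\varepsilon^2+C_2\sup E_3^{3/2}$, which closes the bootstrap with $A^2=2C_1$. You gesture at ``the $\mathcal{M}_3$ machinery,'' but without the dyadic-in-time decomposition (or an equivalent device) the slow growth of $\mathcal{M}_3$ is not converted into a bounded contribution, and the proof of global (as opposed to almost global) existence does not go through.
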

\par
Based on the method of proving Proposition \ref{mainthm33333},  by some density argument and contraction-mapping argument, we can show Theorem
 \ref{mainthm33}. Because this procedure is routine and in order to keep the paper to a moderate length, we will omit it and refer the reader to \cite{MR2919103} and \cite{MR2262091}.\par
 Now we will prove Proposition \ref{mainthm33333}. We only need to analyze the Cauchy problem \eqref{Cauchynew}--\eqref{cauchydata1} with the null condition \eqref{null1100new}.  Assume that $u=u(t,x)$ is a smooth and radially symmetric solution of the Cauchy problem \eqref{Cauchynew}--\eqref{cauchydata1} on $[0,T)$.
 We will show that there exist positive constants $\varepsilon_0$ and $A$ such that for any $T>0$, we have $\sup\limits_{0\leq t<T}E^{1/2}_{3}(u(t))\leq  A \varepsilon$ under the assumption \eqref{label345newe} and $\sup\limits_{0\leq t<T}E^{1/2}_{3}(u(t))\leq  2A \varepsilon$, where $0<\varepsilon\leq\varepsilon_0$.\par
 \subsection{General energy and ghost weight energy estimates}
 Following Alinhac \cite{Alinhac01,MR2666888}, we will use the ghost weight energy method.
   By Lemma \ref{Nulll}, we have
\begin{align}\label{heng}
 \sum_{\substack{|a|\leq 2\\a_4\leq 1}}\int\langle &e^{-q(\sigma)}\partial_t Z^{a}u, \Box Z^{a}u\rangle \, dx=
\sum_{\substack{|a|\leq 2\\a_4\leq 1}}\sum_{{b+c+d=a}}\int \langle e^{-q(\sigma)}\partial_t Z^{a}u, N_d(Z^{b}u,Z^{c}u)\rangle \, dx.
\end{align}
As we are now treating a quasilinear system, for the right-hand side of \eqref{heng}, special attention should be paid on terms with $b=a$ or $c=a$ with $|a|=2$. Noting \eqref{symmert}, we can rewrite \eqref{heng} as
\begin{align}\label{heng1}
 &\sum_{\substack{|a|\leq 2\\a_4\leq 1}}\int\langle e^{-q(\sigma)}\partial_t Z^{a}u, \Box Z^{a}u\rangle \, dx\nonumber\\
&=2\sum_{\substack{|a|= 2\\a_4\leq 1}}\int \langle e^{-q(\sigma)}\partial_t Z^{a}u, N(u,Z^{a}u)\rangle\, dx+\sum_{\substack{|a|=2\\a_4\leq 1}}\sum_{\substack{b+c+d=a\\b,c\neq a}}\int \langle e^{-q(\sigma)}\partial_t Z^{a}u, N_d(Z^{b}u,Z^{c}u)\rangle \,dx\nonumber\\
&+\sum_{|a|\leq 1}\sum_{b+c+d=a}\int \langle e^{-q(\sigma)}\partial_t Z^{a}u, N_d(Z^{b}u,Z^{c}u)\rangle \,dx.
\end{align}
\par
For the left-hand side of \eqref{heng1}, by integration by parts we have
\begin{align}\label{LEFT}
\int \langle e^{-q(\sigma)}&\partial_t Z^{a}u, \Box Z^{a}u\rangle\, dx\nonumber\\
&=\frac{1}{2}\frac{d}{d t}\int e^{-q(\sigma)}|\partial Z^{a}u|^2\,dx+\frac{1}{2}\int  e^{-q(\sigma)}\left<t-r\right>^{-2}{|TZ^{a}u|^2}\, dx.
\end{align}
\par
For the right-hand side of \eqref{heng1}, it follows from the symmetric conditions \eqref{sym1new} and the integration by parts that
\begin{align}\label{RIGHT}
&\int \langle e^{-q(\sigma)}\partial_t Z^{a}u, N(u,Z^{a}u)\rangle\, dx\nonumber\\
&=g^{ijk}_{lmn}\int e^{-q(\sigma)}\partial_t(Z^{a}u)^i\partial_{l}\big(\partial_mu^{j}\partial_{n}(Z^{a}u)^k\big)\, dx\nonumber\\
&=-g^{ijk}_{lmn}\int e^{-q(\sigma)}\partial_t\partial_{l}(Z^{a}u)^i\partial_mu^{j}\partial_{n}(Z^{a}u)^k\, dx-g^{ijk}_{lmn}\int e^{-q(\sigma)}q'(\sigma)\omega_l\partial_t(Z^{a}u)^i\partial_mu^{j}\partial_{n}(Z^{a}u)^k\, dx\nonumber\\
&=-\frac{1}{2}g^{ijk}_{lmn}\partial_t\int e^{-q(\sigma)}\partial_{l}(Z^{a}u)^i\partial_mu^{j}\partial_{n}(Z^{a}u)^k\, dx
+\frac{1}{2}g^{ijk}_{lmn}\int e^{-q(\sigma)}\partial_l(Z^{a}u)^i\partial_m\partial_tu^{j}\partial_{n}(Z^{a}u)^k\,dx\nonumber\\
&+\frac{1}{2}g^{ijk}_{lmn}\int e^{-q(\sigma)}q'(\sigma)\partial_l(Z^{a}u)^i\partial_mu^{j}\partial_{n}(Z^{a}u)^k\,dx-g^{ijk}_{lmn}\int e^{-q(\sigma)}q'(\sigma)T_l(Z^{a}u)^i\partial_mu^{j}\partial_{n}(Z^{a}u)^k\, dx.
\end{align}
\par
Define the perturbed  energy
\begin{align}\label{pEnrt}
\widetilde{E}_3(u(t))=\frac{1}{2}\sum_{\substack{|a|\leq 2\\ a_4\leq 1}}\int e^{-q(\sigma)}|\partial Z^{a}u|^2\,dx
+\sum_{\substack{|a|=2\\ a_4\leq 1}}g^{ijk}_{lmn}\int e^{-q(\sigma)}\partial_{l}(Z^{a}u)^i\partial_mu^{j}\partial_{n}(Z^{a}u)^k\, dx.
\end{align}
Noting that $\|\partial u(t)\|_{L^{\infty}}\leq CE^{1/2}_{3}(u(t))$, by \eqref{noting}, for small solutions we have
\begin{align}\label{equiee}
(2c)^{-1}{E}_3(u(t))\leq \widetilde{E}_3(u(t))\leq 2c{E}_3(u(t)).
\end{align}
\par
Returning to \eqref{heng1}, by \eqref{LEFT}, \eqref{RIGHT} and \eqref{pEnrt}, we have the following energy identity:
\begin{align}\label{high}
&\frac{d}{dt}\widetilde{E}_3(u(t))+\mathcal {E}_3(u(t))\nonumber\\
&=\sum_{\substack{|a|= 2\\a_4\leq 1}}\int e^{-q(\sigma)}\widetilde{N}(Z^{a}u, \partial_tu, Z^{a}u)\,dx+\sum_{\substack{|a|= 2\\a_4\leq 1}}\int e^{-q(\sigma)}q'(\sigma)\widetilde{N}(Z^{a}u, u, Z^{a}u)\,dx\nonumber\\
&~~~-2\sum_{\substack{|a|= 2\\a_4\leq 1}}g^{ijk}_{lmn}\int e^{-q(\sigma)}q'(\sigma)T_l(Z^{a}u)^i\partial_mu^{j}\partial_{n}(Z^{a}u)^k\, dx\nonumber\\
&~~~
+\sum_{\substack{|a|\leq 2\\a_4\leq 1}}\sum_{\substack{b+c+d=a\\|b|,|c|\leq 1}}\int e^{-q(\sigma)} \langle \partial_t Z^{a}u, N_d(Z^{b}u,Z^{c}u)\rangle \,dx.
\end{align}
\par
Now we are ready to estimate all the terms on the right-hand side of \eqref{high}. For this purpose, we will separate integrals over the regions
$r\geq \langle t\rangle/2$, $1/2\leq r\leq \langle t\rangle/2$ and $r\leq 1/2$, respectively. To get enough decay in time on the region $r\geq \langle t\rangle/2$,  we need to exploit
 the null condition. \par
 {\bf{ On the region $r\geq {\langle t\rangle}/{2}$.}}  Now we consider all the terms on the right-hand side of \eqref{high} on the region $r\geq {\langle t\rangle}/{2}$.
For the first term on the right-hand side of \eqref{high}, for $|a|= 2, a_4\leq 1$, it follows from \eqref{xuyao2} that
\begin{align}
\big|\widetilde{N}(Z^{a}u, \partial_tu, Z^{a}u)\big|\leq \frac{C}{r}\big(|\Omega Z^au||\nabla \partial_tu||\nabla Z^a u|+|\nabla Z^a u||\Omega\partial_t u||\nabla Z^a u|\big).
\end{align}
Noting \eqref{haohaohao}, we have that
\begin{align}\label{lioajie34}
&\big\|e^{-q(\sigma)}\widetilde{N}(Z^{a}u, \partial_tu, Z^{a}u)\big\|_{L^1(r\geq \langle t\rangle/2)}\nonumber\\
&\leq C\| T Z^au \nabla \partial_tu\nabla Z^a u\|_{L^1(r\geq \langle t\rangle/2)}+C\langle t\rangle^{-1}\|\nabla Z^a u\partial_t u\nabla Z^a u\|_{L^1(r\geq \langle t\rangle/2)}.
\end{align}
Via \eqref{weight33} and Proposition \ref{prop2233}, we can get
\begin{align}\label{follw111}
&\| T Z^au \nabla \partial_tu\nabla Z^a u\|_{L^1(r\geq \langle t\rangle/2)}\nonumber\\
&\leq C\langle t\rangle^{-1}\|\langle t-r\rangle^{-1} T Z^au ~r\langle t-r\rangle\nabla \partial_tu\nabla Z^a u\|_{L^1(r\geq \langle t\rangle/2)}
\nonumber\\
&\leq C\langle t\rangle^{-1}\|\langle t-r\rangle^{-1} T Z^au\|_{L^2} \|r\langle t-r\rangle\partial\nabla u\|_{L^{\infty}}\|\nabla Z^a u\|_{L^2}
\nonumber\\
&\leq C\langle t\rangle^{-1}\mathcal {E}_3^{1/2}(u(t))\mathcal {X}_{3}(u(t)) {E}_3^{1/2}(u(t))
\nonumber\\
&\leq C\langle t\rangle^{-1}{E}_3(u(t))\mathcal {E}_3^{1/2}(u(t)).
\end{align}
It follows from \eqref{weight11} that
\begin{align}\label{follw222}
&\langle t\rangle^{-1}\|\nabla Z^a u\partial_t u\nabla Z^a u\|_{L^1(r\geq \langle t\rangle/2)}\nonumber\\
&\leq C\langle t\rangle^{-2}\|\nabla Z^a u~ r\partial_t u\nabla Z^a u\|_{L^1(r\geq \langle t\rangle/2)}\nonumber\\
&\leq C\langle t\rangle^{-2}\|\nabla Z^a u\|_{L^2} \|r\partial_t u\|_{L^{\infty}}\|\nabla Z^a u\|_{L^2}\nonumber\\
&\leq C\langle t\rangle^{-2} {E}_3^{3/2}(u(t)).
\end{align}
So it follows from \eqref{lioajie34}--\eqref{follw222} that
\begin{align}\label{ssss234}
&\sum_{\substack{|a|= 2\\a_4\leq 1}}\big\| e^{-q(\sigma)}\widetilde{N}(Z^{a}u, \partial_tu, Z^{a}u)\big\|_{L^1(r\geq \langle t\rangle/2)}\nonumber\\
&~~~~~~\leq C\langle t\rangle^{-1}{E}_3(u(t))\mathcal {E}_3^{1/2}(u(t))+C\langle t\rangle^{-2} {E}_3^{3/2}(u(t)).
\end{align}
\par
Similarly to the first term, for the second term on the right-hand side of \eqref{high}, for $|a|= 2, a_4\leq 1$, we have
\begin{align}\label{ssss234}
&\|e^{-q(\sigma)}q'(\sigma)\widetilde{N}(Z^{a}u, u, Z^{a}u)\|_{L^1(r\geq \langle t\rangle/2)}\nonumber\\
&\leq C\|q'(\sigma) TZ^{a}u\nabla u\nabla Z^au\|_{L^1(r\geq \langle t\rangle/2)}+C\langle t\rangle^{-1}\|q'(\sigma) \nabla Z^{a}u u\nabla Z^au\|_{L^1(r\geq \langle t\rangle/2)}.
\end{align}
For the first term on the right-hand side of \eqref{ssss234}, by \eqref{weight11} we have
\begin{align}\label{dffggg}
&\|q'(\sigma) TZ^{a}u\nabla u\nabla Z^au\|_{L^1(r\geq \langle t\rangle/2)}\nonumber\\
&\leq C\langle t\rangle^{-1}\|\langle t-r\rangle^{-2} TZ^{a}u~ r\nabla u\nabla Z^au\|_{L^1(r\geq \langle t\rangle/2)}\nonumber\\
&\leq C\langle t\rangle^{-1}\|\langle t-r\rangle^{-1} TZ^{a}u\|_{L^2}\|r\nabla u\|_{L^{\infty}}\|\nabla Z^au\|_{L^2}\nonumber\\
&\leq C\langle t\rangle^{-1}{E}_3(u(t))\mathcal {E}_3^{1/2}(u(t)).
\end{align}
For the second term on the right-hand side of \eqref{ssss234}, by \eqref{weight22} we have
\begin{align}\label{lioaj45r}
&\langle t\rangle^{-1}\|q'(\sigma) \nabla Z^{a}u u\nabla Z^au\|_{L^1(r\geq \langle t\rangle/2)}\nonumber\\
&\leq C\langle t\rangle^{-3/2}\|\langle t-r\rangle^{-2} \nabla Z^{a}u~ r^{1/2}u\nabla Z^au\|_{L^1(r\geq \langle t\rangle/2)}
\nonumber\\
&\leq C\langle t\rangle^{-3/2}\| \nabla Z^{a}u \|_{L^2}\|r^{1/2}u\|_{L^{\infty}}\|\nabla Z^au\|_{L^2}\nonumber\\
&\leq C\langle t\rangle^{-3/2}{E}^{3/2}_3(u(t)).
\end{align}
It follows from \eqref{ssss234}--\eqref{lioaj45r}
\begin{align}
&\sum_{\substack{|a|= 2\\a_4\leq 1}}\|e^{-q(\sigma)}q'(\sigma)\widetilde{N}(Z^{a}u, u, Z^{a}u)\|_{L^1(r\geq \langle t\rangle/2)}\nonumber\\
&~~~~~\leq C\langle t\rangle^{-1}{E}_3(u(t))\mathcal {E}_3^{1/2}(u(t))+C\langle t\rangle^{-3/2}{E}^{3/2}_3(u(t)).
\end{align}
\par
The third term on the right-hand side of \eqref{high} can be estimated by the same way as \eqref{dffggg}.
\par
For the fourth term on the right-hand side of \eqref{high}, for any $|a|\leq 2, a_4\leq 1, b+c+d=a, |b|,|c|\leq 1$, by Lemma \ref{Nulll},  \eqref{xuyao1}, \eqref{111222}, \eqref{compoi1} and noting that $\widetilde{\Omega}u=0$ we have
\begin{align}
&\big|\langle \partial_t Z^{a}u, N_d(Z^{b}u,Z^{c}u)\rangle \big|\nonumber\\
&\leq \frac{C}{r}|\partial_t Z^{a}u|\big(|\nabla \Omega Z^{b}u||\nabla Z^{c}u|+|\nabla \Omega Z^{c}u||\nabla Z^{b}u|+|\Omega Z^{b}u||\nabla^2Z^{c}u|+|\Omega Z^{c}u||\nabla^2 Z^{b}u|\big)\nonumber\\
&\leq \frac{C}{r}|\partial_t Z^{a}u|\big(|\nabla Z^{b}u||\nabla  Z^{c}u|+| Z^{b}u||\nabla^2Z^{c}u|+| Z^{c}u||\nabla^2 Z^{b}u|\big).
\end{align}
So it follows from \eqref{weight11} and \eqref{weight22} that
\begin{align}
&\sum_{\substack{|a|\leq 2\\a_4\leq 1}}\sum_{\substack{b+c+d=a\\|b|,|c|\leq 1}}\big\|e^{-q(\sigma)}\langle \partial_t Z^{a}u, N_d(Z^{b}u,Z^{c}u)\rangle \big\|_{L^1(r\geq \langle t\rangle/2)}\nonumber\\
&\leq C\langle t\rangle^{-3/2}\sum_{\substack{|a|\leq 2\\a_4\leq 1}}\sum_{\substack{b+c\leq a\\|b|,|c|\leq 1}}\|\partial_t Z^{a}u\|_{L^2}\big(\|r\nabla  Z^{b}u\|_{L^{\infty}}\|\nabla Z^{c}u\|_{L^2}+\|r^{1/2} Z^{b}u\|_{L^{\infty}}\|\nabla^2Z^{c}u\|_{L^2}\nonumber\\
&~~~~~~~~~~~~~~~~~~~~~~~~~~~~~~~~~~+\|r^{1/2} Z^{c}u\|_{L^{\infty}}\|\nabla^2 Z^{b}u\|_{L^2}\big)
\nonumber\\
&\leq C\langle t\rangle^{-3/2}{E}^{3/2}_3(u(t)).
\end{align}
\par

Hence by the above argument, we know that on the region $r\geq \langle t\rangle/2$, the right-hand side of \eqref{high}
admits an upper bound of the form
 \begin{align}
\langle t\rangle^{-3/2}E^{3/2}_{3}(u(t))+\langle t\rangle^{-1}{E}_3(u(t))\mathcal {E}_3^{1/2}(u(t)).
 \end{align}
 \par
{\bf{On the region $1/2\leq r\leq \langle t\rangle/2$.}} On the region $1/2\leq r\leq \langle t\rangle/2$, all terms on the right-hand side of \eqref{high}
 are bounded above by
\begin{align}\label{friwe}
&\sum_{\substack{|a|= 2\\ a_4\leq 1}}\|\langle t-r \rangle^{-2}\nabla Z^{a}u\nabla u \nabla Z^{a}u \|_{L^1(1/2\leq r\leq \langle t\rangle/2)}
+\sum_{\substack{|a|= 2\\a_4\leq 1}}\|\nabla Z^{a}u\partial \nabla u\nabla Z^{a}u\|_{L^1(1/2\leq r\leq \langle t\rangle/2)}
\nonumber\\
&+\sum_{\substack{|a|\leq 2\\a_4\leq 1}}\sum_{\substack{b+c\leq a\\|b|,|c|\leq 1}}\|\partial Z^{a}u \nabla Z^{b} u\nabla^2 Z^{c}u\|_{L^1(1/2\leq r\leq \langle t\rangle/2)}.
\end{align}
\par
For the first term on the right-hand side of \eqref{friwe}, for $|a|=2, a_4\leq 1$, it follows from the Sobolev embedding $H^2(\mathbb{R}^3)\hookrightarrow L^{\infty}(\mathbb{R}^3)$ that
\begin{align}\label{sameway}
&\|\langle t-r \rangle^{-2}\nabla Z^{a}u \nabla u\nabla Z^{a}u\|_{L^1(1/2\leq r\leq \langle t\rangle/2)}\nonumber\\
&\leq C\langle t\rangle^{-2}\|\nabla Z^{a}u~  \nabla u \nabla Z^{a}u\|_{L^1(1/2\leq r\leq \langle t\rangle/2)}\nonumber\\
&\leq C\langle t\rangle^{-2}\|\nabla Z^{a}u\|^2_{L^2} \| \nabla u\|_{L^{\infty}}\nonumber\\
&\leq C\langle t\rangle^{-2}E_{3}^{3/2}(u(t)) .
\end{align}
\par
For the second term on the right-hand side of \eqref{friwe}, for $|a|=2, a_4\leq 1$, by \eqref{weight44} and Proposition \ref{prop2233} we have
\begin{align}\label{similar333}
&\|\nabla Z^{a}u\partial \nabla u\nabla Z^{a}u\|_{L^1(1/2\leq r\leq \langle t\rangle/2)}\nonumber\\
&\leq C\langle t\rangle^{-1}\|\nabla Z^{a}u~ r^{1/2}\langle t-r\rangle\partial \nabla u \langle r\rangle^{-1/2}\nabla Z^{a}u\|_{L^1(1/2\leq r\leq \langle t\rangle/2)}\nonumber\\
&\leq C\langle t\rangle^{-1}\|\nabla Z^{a}u\|_{L^2} \|r^{1/2}\langle t-r\rangle\partial \nabla u\|_{L^{\infty}} \|\langle r\rangle^{-1/2}\nabla Z^{a}u\|_{L^2}\nonumber\\
&\leq C\langle t\rangle^{-1}E_{3}^{1/2}(u(t))\big(E_{2}^{1/2}(u(t))+\mathcal {X}_3(u(t))\big)  \|\langle r\rangle^{-1/2}\nabla Z^{a}u\|_{L^2}\nonumber\\
&\leq C\langle t\rangle^{-1}E_{3}(u(t)) \|\langle r\rangle^{-1/4}r^{-1/4}\nabla Z^{a}u\|_{L^2}
\nonumber\\
&\leq C\langle t\rangle^{-1}E_{3}(u(t)) \mathcal {N}_3^{1/2}(u(t)).
\end{align}
\par
As for the third term on the right-hand side of \eqref{friwe}, for $|a|\leq 2, a_4\leq 1, b+c\leq a, |b|,|c|\leq 1$, if $b_4=0$, by \eqref{weight44} and Proposition \ref{prop2233} we have
\begin{align}
&\|\partial Z^{a}u \nabla Z^{b} u\nabla^2 Z^{c}u\|_{L^1(1/2\leq r\leq \langle t\rangle/2)}\nonumber\\
&\leq C\langle t\rangle^{-1}\|\nabla^2 Z^{c}u  ~ r^{1/2}\langle t-r\rangle\nabla Z^{b} u  \langle r\rangle^{-1/2}\partial Z^{a}u\|_{L^1(1/2\leq r\leq \langle t\rangle/2)}\nonumber\\
&\leq C\langle t\rangle^{-1} \|\nabla^2 Z^{c}u\|_{L^2}\| r^{1/2}\langle t-r\rangle\nabla Z^{b} u \|_{L^{\infty}}\|\langle r\rangle^{-1/2}\partial Z^{a}u\|_{L^2}\nonumber\\
&\leq C\langle t\rangle^{-1}E_{3}^{1/2}(u(t))\big(E_{2}^{1/2}(u(t))+\mathcal {X}_3(u(t))\big)  \|\langle r\rangle^{-1/2}\partial Z^{a}u\|_{L^2}\nonumber\\
&\leq C\langle t\rangle^{-1}E_{3}(u(t)) \|\langle r\rangle^{-1/4}r^{-1/4}\partial Z^{a}u\|_{L^2}\nonumber\\
&\leq C\langle t\rangle^{-1}E_{3}(u(t)) \mathcal {N}_3^{1/2}(u(t)).
\end{align}
If $c_4=0$, it follows from \eqref{weight22} and Proposition \ref{prop2233} that
\begin{align}
&\|\partial Z^{a}u \nabla Z^{b} u\nabla^2 Z^{c}u\|_{L^1(1/2\leq r\leq \langle t\rangle/2)}\nonumber\\
&\leq C\langle t\rangle^{-1}\|\langle t-r\rangle\nabla^2 Z^{c}u  ~ r^{1/2}\nabla Z^{b} u  \langle r\rangle^{-1/2}\partial Z^{a}u\|_{L^1(1/2\leq r\leq \langle t\rangle/2)}\nonumber\\
&\leq C\langle t\rangle^{-1} \|\langle t-r\rangle\nabla^2 Z^{c}u\|_{L^2}\| r^{1/2}\nabla Z^{b} u \|_{L^{\infty}}\|\langle r\rangle^{-1/2}\partial Z^{a}u\|_{L^2}\nonumber\\
&\leq C\langle t\rangle^{-1}\mathcal {X}_3(u(t))E_{3}^{1/2}(u(t))  \|\langle r\rangle^{-1/2}\partial Z^{a}u\|_{L^2}\nonumber\\
&\leq C\langle t\rangle^{-1}E_{3}(u(t)) \|\langle r\rangle^{-1/4}r^{-1/4}\partial Z^{a}u\|_{L^2}\nonumber\\
&\leq C\langle t\rangle^{-1}E_{3}(u(t)) \mathcal {N}_3^{1/2}(u(t)).
\end{align}

\par
Hence by the above argument, we know that on the region $1/2\leq r\leq \langle t\rangle/2$, the right-hand side of \eqref{high}
  admits an upper bound of the form
 \begin{align}
 \langle t\rangle^{-2}E_{3}^{3/2}(u(t))+\langle t\rangle^{-1}E_{3}(u(t))\mathcal {N}_3^{1/2}(u(t)).
 \end{align}
 \par
{\bf{On the region $r\leq 1/2$.}} Similarly to \eqref{friwe}, on the region $r\leq 1/2$, all terms on the right-hand side of \eqref{high}
 are bounded above by
\begin{align}\label{friwe22}
&\sum_{\substack{|a|= 2\\a_4\leq 1}}\|\langle t-r \rangle^{-2}\nabla Z^{a}u\nabla u \nabla Z^{a}u \|_{L^1(r\leq 1/2)}
+\sum_{\substack{|a|= 2\\a_4\leq 1}}\|\nabla Z^{a}u\partial \nabla u\nabla Z^{a}u\|_{L^1(r\leq 1/2)}\nonumber\\
&+\sum_{\substack{|a|\leq 2\\a_4\leq 1}}\sum_{\substack{b+c\leq a\\|b|,|c|\leq 1}}\|\partial Z^{a}u \nabla Z^{b} u\nabla^2 Z^{c}u\|_{L^1(r\leq 1/2)}.
\end{align}
\par
The first term on the right-hand side of \eqref{friwe22} can be estimated by the same way as \eqref{sameway}. For $|a|=2, a_4\leq 1$, we have
\begin{align}
&\|\langle t-r \rangle^{-2}\nabla Z^{a}u \nabla u\nabla Z^{a}u\|_{L^1(r\leq 1/2)}\nonumber\\
&\leq C\langle t\rangle^{-2}\|\nabla Z^{a}u \nabla u \nabla Z^{a}u\|_{L^1(r\leq 1/2)}\nonumber\\
&\leq C\langle t\rangle^{-2}\|\nabla Z^{a}u\|^2_{L^2} \| \nabla u\|_{L^{\infty}}\nonumber\\
&\leq C\langle t\rangle^{-2}E_{3}^{3/2}(u(t)) .
\end{align}
\par
For the second term on the right-hand side of \eqref{friwe22}, for $|a|=2, a_4\leq 1$, by \eqref{weight44} and Proposition \ref{prop2233} we have
\begin{align}\label{similer556}
&\|\nabla Z^{a}u\partial \nabla u\nabla Z^{a}u\|_{L^1(r\leq 1/2)}\nonumber\\
&\leq C\langle t\rangle^{-1}\|r^{-1/4}\nabla Z^{a}u~ r^{1/2}\langle t-r\rangle\partial \nabla u~  r^{-1/4}\nabla Z^{a}u\|_{L^1(r\leq 1/2)}\nonumber\\
&\leq C\langle t\rangle^{-1}\|r^{-1/4}\nabla Z^{a}u\|^2_{L^2(r\leq 1)} \|r^{1/2}\langle t-r\rangle\partial \nabla u\|_{L^{\infty}}\nonumber\\
&\leq C\langle t\rangle^{-1}\|r^{-1/4}\nabla Z^{a}u\|^2_{L^2(r\leq 1)}\big(E_{2}^{1/2}(u(t))+\mathcal {X}_3(u(t))\big)\nonumber\\
&\leq C\langle t\rangle^{-1}E_{3}^{1/2}(u(t))\|\langle r\rangle^{-1/4}r^{-1/4}\nabla Z^{a}u\|^2_{L^2}\nonumber\\
&\leq C\langle t\rangle^{-1}E_{3}^{1/2}(u(t))\mathcal {N}_3(u(t)).
\end{align}
\par
For the third term on the right-hand side of \eqref{friwe22}, for $|a|\leq 2, a_4\leq 1, b+c\leq a,|b|,|c|\leq 1$, if $b_4=0$,  by \eqref{weight44} and Proposition \ref{prop2233} we have
\begin{align}
&\|\partial Z^{a}u \nabla Z^{b} u\nabla^2 Z^{c}u\|_{L^1(r\leq 1/2)}\nonumber\\
&\leq C\langle t\rangle^{-1}\| r^{-1/4}\partial Z^{a}u ~ r^{1/2}\langle t-r\rangle\nabla Z^{b} u ~ r^{-1/4}\nabla^2 Z^{c}u\|_{L^1(r\leq 1/2)}\nonumber\\
&\leq C\langle t\rangle^{-1}\|r^{-1/4}\partial Z^{a}u\|_{L^2(r\leq 1)} \| r^{1/2}\langle t-r\rangle\nabla Z^{b} u \|_{L^{\infty}}\|\langle r\rangle^{-1/4}\nabla^2 Z^{c}u\|_{L^2(r\leq 1)}\nonumber\\
&\leq C\langle t\rangle^{-1}\|r^{-1/4}\partial Z^{a}u\|_{L^2(r\leq 1)} \big(E_{2}^{1/2}(u(t))+\mathcal {X}_3(u(t))\big)\| r^{-1/4}\nabla^2 Z^{c}u\|_{L^2(r\leq 1)}\nonumber\\
&\leq C\langle t\rangle^{-1}E_{3}^{1/2}(u(t))\| \langle r\rangle^{-1/4}r^{-1/4}\partial Z^{a}u\|_{L^2}\| \langle r\rangle^{-1/4}r^{-1/4}\nabla^2 Z^{c}u\|_{L^2}\nonumber\\
&\leq C\langle t\rangle^{-1}E_{3}^{1/2}(u(t))\mathcal {N}_3(u(t)).
\end{align}
If $c_4=0$,  by \eqref{charu00} and Proposition \ref{prop2233} we have
\begin{align}
&\|\partial Z^{a}u \nabla Z^{b} u\nabla^2 Z^{c}u\|_{L^1(r\leq 1/2)}\nonumber\\
&\leq C\langle t\rangle^{-1}\| r^{-1/4}\partial Z^{a}u ~ r^{1/4}\nabla Z^{b} u \langle t-r\rangle\nabla^2 Z^{c}u\|_{L^1(r\leq 1/2)}\nonumber\\
&\leq C\langle t\rangle^{-1}\|r^{-1/4}\partial Z^{a}u\|_{L^2(r\leq 1)} \| r^{1/4}\nabla Z^{b} u \|_{L^{\infty}(r\leq 1)}\|\langle t-r\rangle\nabla^2 Z^{c}u\|_{L^2}\nonumber\\
&\leq C\langle t\rangle^{-1}\mathcal {X}_3(u(t))\|r^{-1/4}\partial Z^{a}u\|_{L^2(r\leq 1)} \nonumber\\
&~~\big(\| \langle r\rangle^{-1/4} r^{-1/4}\nabla^2 Z^{b}u\|_{L^2}+\| \langle r\rangle^{-1/4} r^{-1/4}\nabla Z^{b}u\|_{L^2}+\| \langle r\rangle^{-1/4} r^{-5/4}\nabla Z^{b}u\|_{L^2}\big)\nonumber\\
&\leq C\langle t\rangle^{-1}E_{3}^{1/2}(u(t))\|\langle r\rangle^{-1/4} r^{-1/4}\partial Z^{a}u\|_{L^2} \nonumber\\
&~~\big(\| \langle r\rangle^{-1/4} r^{-1/4}\nabla^2 Z^{b}u\|_{L^2}+\| \langle r\rangle^{-1/4} r^{-1/4}\nabla Z^{b}u\|_{L^2}+\| \langle r\rangle^{-1/4} r^{-5/4}\nabla Z^{b}u\|_{L^2}\big)\nonumber\\
&\leq C\langle t\rangle^{-1}E_{3}^{1/2}(u(t))\mathcal {N}_3(u(t)).
\end{align}
\par
Hence by the above argument, we know that on the region $r\leq 1/2$, the right-hand side of \eqref{high}
admits an upper bound of the form
 \begin{align}
 &\langle t\rangle^{-2}E_{3}^{3/2}(u(t))+\langle t\rangle^{-1}E^{1/2}_{3}(u(t)) \mathcal {N}_3(u(t)).
 \end{align}
 \par

 \subsection{Conclusion of the proof}
  From the above three parts argument, we have that
 \begin{align}\label{sssdddd}
 &\frac{d}{dt}\widetilde{E}_3(u(t))+\mathcal {E}_3(u(t))\nonumber\\
 &\leq C\langle t\rangle^{-1}{E}_3(u(t))\mathcal {E}_3^{1/2}(u(t))+C\langle t\rangle^{-3/2}E^{3/2}_{3}(u(t)\nonumber\\
 &+C\langle t\rangle^{-1}E_{3}(u(t))\mathcal {N}^{1/2}_3(u(t))+C\langle t\rangle^{-1}E^{1/2}_{3}(u(t)) \mathcal {N}_3(u(t))\nonumber\\
 &\leq \frac{1}{4}\mathcal {E}_3(u(t))+C\langle t\rangle^{-2}E^{2}_{3}(u(t)+C\langle t\rangle^{-3/2}E^{3/2}_{3}(u(t)\nonumber\\
 &+C\langle t\rangle^{-1}E_{3}(u(t))\mathcal {N}^{1/2}_3(u(t))+C\langle t\rangle^{-1}E^{1/2}_{3}(u(t)) \mathcal {N}_3(u(t)).
 \end{align}
The first term on the right-hand side of \eqref{sssdddd} can be absorbed to the left-hand side. So we have
 \begin{align}\label{sssdddd111}
 &\frac{d}{dt}\widetilde{E}_3(u(t))\nonumber\\
 &\leq C\langle t\rangle^{-3/2}E^{3/2}_{3}(u(t)+C\langle t\rangle^{-1}E_{3}(u(t))\mathcal {N}^{1/2}_3(u(t))+C\langle t\rangle^{-1}E^{1/2}_{3}(u(t)) \mathcal {N}_3(u(t)).
 \end{align}
 Integrating on time from $0$ to $T$ and noting \eqref{equiee}, we have that
 \begin{align}\label{sssdddd222}
&\sup_{0\leq t<T}{E}_3(u(t))\nonumber\\
&\leq C {E}_3(u(0))+ C\int_0^{T}\langle t\rangle^{-3/2}E^{3/2}_{3}(u(t))dt\nonumber\\
&+C\int_0^{T}\langle t\rangle^{-1}E_{3}(u(t)) \mathcal {N}^{1/2}_3(u(t))dt\nonumber\\
 &+C\int_0^{T}\langle t\rangle^{-1}E^{1/2}_{3}(u(t)) \mathcal {N}_3(u(t))dt.
 \end{align}
 \par
 It is obvious that
 \begin{align}\label{326}
 \int_0^{T}\langle t\rangle^{-3/2}E^{3/2}_{3}(u(t))dt\leq \int_0^{+\infty}\langle t\rangle^{-3/2}dt\sup_{0\leq t<T}E^{3/2}_3(u(t))\leq C\sup_{0\leq t<T}E^{3/2}_3(u(t)).
 \end{align}
 \par
 In order to treat the last two terms on the right-hand side of \eqref{sssdddd222}, we will employ the KSS type energy $\mathcal {M}_3(u(t))$ and the dyadic decomposition technique in time (see page 363 of \cite{MR1979951}).
 \par
 Without loss of generality, we can assume $T>1$.
 It follows from the H\"{o}lder inequality and Proposition \ref{prop2233kss} that
 \begin{align}\label{liaotian}
&\int_0^{1}\langle t\rangle^{-1}E_{3}(u(t)) \mathcal {N}^{1/2}_3(u(t))dt\nonumber\\
&\leq C\sup_{0\leq t\leq 1}E_{3}(u(t)) \Big(\int_0^1\mathcal {N}_3(u(t))dt\Big)^{1/2}\nonumber\\
&\leq C\sup_{0\leq t\leq 1}E_{3}(u(t)) \mathcal {M}^{1/2}_3(u(1))\nonumber\\
&\leq C\sup_{0\leq t< T}E^{3/2}_{3}(u(t)).
 \end{align}
 Take an integer $N$ such that $2^{N}<T\leq 2^{N+1}$.  By the H\"{o}lder inequality and Proposition \ref{prop2233kss}, we have
 \begin{align}\label{simil68jn}
 &\int_1^{T}\langle t\rangle^{-1}E_{3}(u(t))  \mathcal {N}^{1/2}_3(u(t))dt\nonumber\\
 &\leq \sum_{k=0}^{N}2^{-k}\int_{2^{k}}^{2^{k+1}} \mathcal {N}^{1/2}_3(u(t))dt\sup_{0\leq t<T}E_3(u(t))\nonumber\\
 &\leq \sum_{k=0}^{N}2^{-k/2} \big(\int_{2^{k}}^{2^{k+1}} \mathcal {N}_3(u(t))dt\big)^{1/2}\sup_{0\leq t<T}E_3(u(t))\nonumber\\
 &\leq C\sum_{k=0}^{N}2^{-k/2}\mathcal {M}^{1/2}_3(u(2^{k+1}))\sup_{0\leq t<T}E_3(u(t))
 \nonumber\\
 &\leq C\sum_{k=0}^{N}2^{-k/2}2^{\delta k}(\log(2+2^{k+1}))^{1/2}\sup_{0\leq t<T}E^{3/2}_3(u(t)).
 \end{align}
 Noting that $\delta$ is sufficiently small, we have
 \begin{align}\label{123dwwwd}
 \sum_{k=0}^{N}2^{-k/2}2^{\delta k}(\log(2+2^{k+1}))^{1/2}\leq  \sum_{k=0}^{+\infty}2^{-k/2}2^{\delta k}(\log(2+2^{k+1}))^{1/2}\leq C.
 \end{align}
 The combination of \eqref{liaotian}--\eqref{123dwwwd} gives
 \begin{align}\label{123ddggffff}
 \int_0^{T}\langle t\rangle^{-1}E_{3}(u(t))   \mathcal {N}^{1/2}_3(u(t))dt
 \leq C\sup_{0\leq t<T}E^{3/2}_3(u(t)).
 \end{align}
 \par
By Proposition \ref{prop2233kss}, we have
 \begin{align}\label{new11weee}
&\int_0^{1}\langle t\rangle^{-1}E^{1/2}_{3}(u(t))   \mathcal {N}_3(u(t))dt\nonumber\\
&\leq C\sup_{0\leq t\leq 1}E^{1/2}_{3}(u(t))  \int_0^{1} \mathcal {N}_3(u(t))dt\nonumber\\
&\leq C\sup_{0\leq t\leq 1}E^{1/2}_{3}(u(t)) \mathcal {M}_3(u(1))\nonumber\\
&\leq C\sup_{0\leq t< T}E^{3/2}_{3}(u(t)).
 \end{align}
 Similarly to \eqref{simil68jn}, it follows from Proposition \ref{prop2233kss} that
\begin{align}
&\int_1^{T}\langle t\rangle^{-1}E^{1/2}_{3}(u(t))  \mathcal {N}_3(u(t))dtdt\nonumber\\
&\leq \sum_{k=0}^{N}2^{-k}\int_{2^k}^{2^{k+1}}  \mathcal {N}_3(u(t))dt\sup_{0\leq t<T}E^{1/2}_{3}(u(t))
\nonumber\\
&\leq C\sum_{k=0}^{N}2^{-k}\mathcal {M}_3(u(2^{k+1}))\sup_{0\leq t<T}E^{1/2}_{3}(u(t))
\nonumber\\
&\leq C\sum_{k=0}^{N}2^{-k}2^{2\delta k}(\log(2+2^{k+1}))\sup_{0\leq t<T}E^{3/2}_{3}(u(t)).
\end{align}
Noting that $\delta$ is sufficiently small, we have
 \begin{align}\label{123dwwwdfffdd}
\sum_{k=0}^{N}2^{-k}2^{2\delta k}(\log(2+2^{k+1}))\leq  \sum_{k=0}^{+\infty}2^{-k}2^{2\delta k}(\log(2+2^{k+1}))\leq C.
 \end{align}
 The combination of \eqref{new11weee}--\eqref{123dwwwdfffdd} gives
 \begin{align}\label{1dgttgffffwwww}
 \int_0^{T}\langle t\rangle^{-1}E^{1/2}_{3}(u(t))    \mathcal {N}_3(u(t))dt
 \leq C\sup_{0\leq t<T}E^{3/2}_3(u(t)).
 \end{align}
 \par
 \par
 It follows from \eqref{sssdddd222}, \eqref{326}, \eqref{123ddggffff} and \eqref{1dgttgffffwwww} that
  \begin{align}\label{sssddddzuizhong}
\sup_{0\leq t<T}{E}_3(u(t))\leq C {E}_3(u(0))+C_2\sup_{0\leq t<T}E^{3/2}_3(u(t))\leq C_1\varepsilon^2+8C_2A^3\varepsilon^3.
 \end{align}
 Take $A^2=2C_1$ and $\varepsilon_0$ so small that
 \begin{align}
 16C_2A\varepsilon_0\leq 1.
 \end{align}
Then for any $\varepsilon$ with $0<\varepsilon\leq \varepsilon_0$ we have
\begin{align}
\sup_{0\leq t< T}E^{1/2}_{k}(u(t))\leq A\varepsilon,
\end{align}
which completes the proof of Proposition \ref{mainthm33333}.

\appendix
  \renewcommand\thesection{\appendixname~\Alph{section}}
\renewcommand\theequation{\Alph{section}.\arabic{equation}}
\section{Proof of Lemma \ref{thmKSS00}} \label{appen}
The proof of Lemma 2.4 repeats essentially the same arguments
as in the preceding papers \cite{MR2128434}, \cite{MR2217314}, \cite{MR2919103},
\cite{MR3650329}, and \cite{almost}.
It therefore suffices to give only the sketch of the proof.
In the following, we use the notation $M:=f(r)\partial_r+(1/r)f(r)$
for a function $f(r)$ to be given later.
Proceeding as in (2.12)-(2.15) of \cite{almost},
we can establish the differential identity
for ${\mathbb R}^3$-valued functions $u=(u^1,u^2,u^3)$
\begin{equation}
\langle
Mu,\Box u
\rangle
=
\partial_t e
+
\nabla\cdot{\hat p}
+
{\hat q},
\end{equation}
where, by $\langle\cdot,\cdot\rangle$
we mean the inner product in ${\mathbb R}^3$,
and
\begin{align}
e&=f(r)(\partial_t u^i)(\partial_r u^i+\frac{1}{r}u^i),\\
{\hat p}&=
\frac{1}{2}f(r)(|\nabla u|^2-|\partial_t u|^2)\omega
-
f(r)(\partial_r u^i+\frac{1}{r}u^i)\nabla u^i\nonumber\\
&+
\frac{rf'(r)-f(r)}{2r^2}|u|^2\omega,\\
{\hat q}&=\frac{f'(r)}{2}|\partial_t u|^2
+\frac{f'(r)}{2}|\partial_r u|^2\nonumber\\
&+
\left(
\frac{f(r)}{r}-\frac{f'(r)}{2}
\right)
|\nabla_\omega u|^2
-
\frac{1}{2}
\left(
\Delta\frac{f(r)}{r}
\right)
|u|^2,
\end{align}
and $\omega\in S^2$.
Recall that repeated indices are summed, regardless of their
position up or down.
Also, as in (2.21) of \cite{almost}, we get
\begin{equation}
\langle
Mu,Hu
\rangle
=
\nabla\cdot{\tilde p}
+
{\tilde q},
\end{equation}
where
\begin{align}
{\tilde p}_l
&=
f(r)\omega_kh_{lm}^{ij}(\partial_k u^i)(\partial_m u^j)
-\frac{1}{2}f(r)\omega_lh_{km}^{ij}
(\partial_k u^i)(\partial_m u^j)\nonumber\\
&+
\frac{1}{r}f(r)h_{lm}^{ij}u^i\partial_m u^j,\quad l=1,2,3,\\
{\tilde q}&=
-\frac{rf'(r)-f(r)}{r}
\omega_k\omega_l
h_{lm}^{ij}(\partial_k u^i)(\partial_m u^j)
+
\frac{1}{2}f'(r)h_{lm}^{ij}(\partial_l u^i)(\partial_m u^j)\nonumber\\
&+
\frac{1}{2}f(r)\omega_k
(\partial_k h_{lm}^{ij})(\partial_l u^i)(\partial_m u^j)
-
\frac{1}{r}
f(r)h_{lm}^{ij}(\partial_l u^i)(\partial_m u^j)\nonumber\\
&-\frac{rf'(r)-f(r)}{r^2}\omega_lh_{lm}^{ij}u^i\partial_m u^j.
\end{align}
First, we take as in \cite{MR2919103} and \cite{almost}
\begin{equation}\label{A8}
f(r)
=
\left(
\frac{r}{1+r}
\right)^{1/2}.
\end{equation}
Proceeding as in (2.37)-(2.42) of \cite{MR2919103},
we can obtain
\begin{align}\label{A9}
&
\|r^{-1/4}\partial u\|_{L^2(0,t;L^2(|x|\leq 1))}^2
+
\|r^{-5/4}u\|_{L^2(0,t;L^2(|x|\leq 1))}^2\nonumber\\
&
\leq
C\sup_{0\leq \tau \leq t}\int_{{\mathbb R}^3}|e|dx
+
C\int_0^t\!\!\!\int_{{\mathbb R}^3}|{\tilde q}|dxd\tau
+
C\int_0^t\!\!\!\int_{{\mathbb R}^3}
|
\langle
Mu,\Box_h u
\rangle
|dxd\tau\nonumber\\
&
\leq
C\|\partial u(0,\cdot)\|_{L^2({\mathbb R}^3)}^2
+
C\int_0^t\!\!\!\int_{{\mathbb R}^3}
|\nabla h||\nabla u|^2
dxd\tau\nonumber\\
&
+
C\int_0^t\!\!\!\int_{{\mathbb R}^3}
\frac{|h|}{r^{1/2}(1+r)^{1/2}}
\left(
\frac{|u|}{r}+|\nabla u|
\right)
|\nabla u|
dxd\tau\nonumber\\
&+
C\int_0^t\!\!\!\int_{{\mathbb R}^3}
\left(
|\nabla u|
+
\frac{|u|}{r^{1/2}(1+r)^{1/2}}
\right)
|\Box_h u|
dxd\tau.
\end{align}
Here, we have also used the fact that
thanks to the symmetry and smallness conditions
\eqref{duichengxing678}-\eqref{smallness},
 the standard energy inequality
\begin{align}\label{A10}
&\sup_{0\leq \tau\leq t}\|\partial u(\tau,\cdot)\|^2_{L^2(\mathbb{R}^{3})}\nonumber\\
&\leq C\|\partial u(0,\cdot)\|^2_{L^2(\mathbb{R}^{3})}+C\int_0^t\!\!\!\int_{{\mathbb R}^3}|\partial_th||\nabla u|^2dxd\tau+C\int_0^t\!\!\!\int_{{\mathbb R}^3}| \partial_tu||\Box_hu |dxd\tau
\end{align}
holds.
We note that owing to the Hardy inequality,
\eqref{A10} also yields
\begin{align}\label{A11}
&
\|\langle r\rangle^{-1/2}
\partial u\|_{L^2(0,t;L^2(|x|\geq t))}^2
+
\|\langle r\rangle^{-3/2}
u\|_{L^2(0,t;L^2(|x|\geq t))}^2\nonumber\\
&\leq C\sup_{0\leq \tau\leq t}\|\partial u(\tau,\cdot)\|^2_{L^2(\mathbb{R}^{3})}\nonumber\\
&\leq C\|\partial u(0,\cdot)\|^2_{L^2(\mathbb{R}^{3})}+C\int_0^t\!\!\!\int_{{\mathbb R}^3}|\partial_th||\nabla u|^2dxd\tau+C\int_0^t\!\!\!\int_{{\mathbb R}^3}| \partial_tu||\Box_hu |dxd\tau.
\end{align}
It remains to consider the weighted $L^2$ estimate
over $[0,t]\times\{x\in{\mathbb R}^3:1<|x|<t\}$ with $t>1$.
We go back to \eqref{A8}, and take as in \cite{MR2128434},
\cite{MR2217314}, and \cite{MR3650329}
\begin{equation}
f(r)
=
\frac{r}{r+\rho},\quad \rho\geq 1.
\end{equation}
Proceeding as in (2.52), (2.53), and (2.57) of \cite{MR3650329},
we can obtain
\begin{align}\label{A13}
&
\int_0^t\!\!\!\int_{{\mathbb R}^3}
\left(
\frac{\rho}{(r+\rho)^2}
|\partial_\tau u|^2
+
\frac{\rho}{(r+\rho)^2}|\partial_r u|^2
\right.\nonumber\\
&
\left.
\hspace{1.5cm}
+
\frac{2r+\rho}{(r+\rho)^2}|\nabla_\omega u|^2
+
\frac{\rho}{r(r+\rho)^3}|u|^2
\right)
dxd\tau\nonumber\\
&\leq
C
\|
\partial u(0,\cdot)
\|_{L^2({\mathbb R}^3)}^2\nonumber\\
&
+
C
\int_0^t\!\!\!\int_{{\mathbb R}^3}
\left(
|\partial h|
+
\frac{|h|}{r+1}
\right)
|\nabla u|
\left(
|\nabla u|
+
\frac{|u|}{r+1}
\right)
dxd\tau\nonumber\\
&+
C
\int_0^t\!\!\!\int_{{\mathbb R}^3}
\left(
|\nabla u|
+
\frac{|u|}{r+1}
\right)
|\Box_h u|
dxd\tau.
\end{align}
Here all the constants $C$ on the right-hand side above
are independent of $\rho\geq 1$.
It is now a routine practice (see, e.g, (2.70)-(2.71) of \cite{MR3650329})
to obtain from \eqref{A13}
\begin{align}\label{A14}
&
\bigl(
\log(2+t)
\bigr)^{-1}
\left(
\|\langle r\rangle^{-1/2}
\partial u\|_{L^2(0,t;L^2(1<|x|<t))}^2
+
\|\langle r\rangle^{-3/2}
u\|_{L^2(0,t;L^2(1<|x|<t))}^2
\right)\nonumber\\
&
\leq
C
\|
\partial u(0,\cdot)
\|_{L^2({\mathbb R}^3)}^2\nonumber\\
&
+
C\int_0^t\!\!\!\int_{{\mathbb R}^3}
\left(
|\partial h|
+
\frac{|h|}{r+1}
\right)
|\nabla u|
\left(
|\nabla u|
+
\frac{|u|}{r+1}
\right)
dxd\tau\nonumber\\
&+
C\int_0^t\!\!\!\int_{{\mathbb R}^3}
\left(
|\nabla u|
+
\frac{|u|}{r+1}
\right)
|\Box_h u|
dxd\tau.
\end{align}
Combining \eqref{A9}, \eqref{A11}, and \eqref{A14},
we have finished the proof of Lemma \ref{thmKSS00}.

 \section*{Acknowledgements}
The first author was supported in part by
JSPS KAKENHI Grant Number JP15K04955.
The second author was supported by Shanghai Sailing Program (No.17YF1400700) and the Fundamental Research Funds for the Central Universities
(No.17D110913).

\end{document}